\def\Var{\mathrm{Var}}
\def\dim{\mathrm{dim}}
\newtheorem{lemma}{Lemma}
\newtheorem{proposition}{Proposition}
\newtheorem{theorem}{Theorem}
\newtheorem{remark}{Remark}
\title{Malcev classification for the variety of left-symmetric algebras}
\author{A. Ryskeldin}
\address{SDU University, Kaskelen, Kazakhstan}
\email{ryskeldin@gmail.com}
\author{B. Sartayev$^{*}$}
\address{Narxoz University, Almaty, Kazakhstan and SDU University, Kaskelen, Kazakhstan}
\email{baurjai@gmail.com}
\keywords{left-symmetric algebra, operad, free algebra}
\subjclass[2020]{17A30, 17A50, 16R10}
\thanks{${}^{*}$Corresponding author: Bauyrzhan Sartayev   (baurjai@gmail.com)}
\begin{document}

\maketitle

\begin{abstract}
In this paper, we study three classes of subvarieties inside the variety of left-symmetric algebras. We show that these subvarieties are naturally related to some well-known varieties, such as alternative, assosymmetric and Zinbiel algebras. For certain subvarieties of the varieties of alternative and assosymmetric algebras, we explicitly construct bases of the corresponding free algebras.
We then define the commutator and anti-commutator operations on these algebras and derive a number of identities satisfied by these operations in all degrees up to $4$.
\end{abstract}

\section{Introduction}

The class of left-symmetric algebras (also called pre-Lie algebras) arose in differential geometry and deformation theory \cite{Gerst,Kosz,Vin}. Recall that a left-symmetric algebra is a vector space \(A\) with a bilinear product \((a,b) \mapsto ab\) such that its associator
\[
(a,b,c) = (ab)c - a(bc)
\]
is symmetric in the first two arguments, that is,
\[
(a,b,c) = (b,a,c)
\]
for all \(a,b,c \in A\).

In this paper, we study three types of left-symmetric algebras based on Malcev's classification. As shown in \cite{Mal'cev}, the space of invariant relations of degree $3$ decomposes as a sum
\[
\mathfrak{L}=\mathfrak{L}_1+\mathfrak{L}_2,
\]
where $\mathfrak{L}_1:=\mathfrak{A}_1+\mathfrak{B}_1+\mathfrak{C}_1+\mathfrak{D}_1$, and the subspaces are defined by the following polynomial relations:
\begin{gather}
\mathfrak{A}_1:\;(ab)c+(ba)c+(ac)b+(ca)b+(bc)a+(cb)a=0,
\label{as1} \\
\mathfrak{B}_1:\;(ab)c-(ba)c-(ac)b+(ca)b+(bc)a-(cb)a=0,
\label{as2}
\end{gather}
\begin{gather}\label{as3}
\mathfrak{C}_1:\;
\begin{cases}
(ab)c+(ba)c-(bc)a-(cb)a=0, \\
(ac)b+(ca)b-(cb)a-(bc)a=0,
\end{cases}
\end{gather}
\begin{gather}\label{as4}
\mathfrak{D}_1:\;
\begin{cases}
(ab)c+(ac)b-(cb)a-(ca)b=0, \\
(ab)c+(ac)b-(bc)a-(ba)c=0.
\end{cases}
\end{gather}
Analogously, set $\mathfrak{L}_2:=\mathfrak{A}_2+\mathfrak{B}_2+\mathfrak{C}_2+\mathfrak{D}_2$, defined by the same relations written with right-normed bracketing. We denote by $\mathcal{L}\mathcal{S}_{\mathfrak{Z}_i}$ the variety of left-symmetric algebras with identity $\mathfrak{Z}_i$, where $\mathfrak{Z}=\mathfrak{A},\mathfrak{B},\mathfrak{C},\mathfrak{D}$ and $i=1,2$. The Malcev classification was studied for the varieties of associative and alternative algebras, see \cite{KazMat,MalcAs1,MalcAs2}. In \cite{MalcAs1} it was shown that, in the associative case, the operads obtained from the Malcev classification via the Koszul dual computation are exactly the alternative, assosymmetric and $(-1,1)$ operads. For further results on the importance of these operads, see \cite{assos1,Dzhuma,As1,As2,binaryperm,-1-1}.

We restrict ourselves to the varieties
\[
\mathcal{L}\mathcal{S}_{\mathcal{A}_1},\mathcal{L}\mathcal{S}_{\mathcal{B}_1} \;\textrm{and}\; \mathcal{L}\mathcal{S}_{\mathcal{A}_2}.
\]
There is no need to consider $\mathcal{L}\mathcal{S}_{\mathcal{B}_2}$ separately, since it coincides with $\mathcal{L}\mathcal{S}_{\mathcal{B}_1}$. Moreover, we shall see that the varieties $\mathcal{L}\mathcal{S}_{\mathcal{A}_1}$, $\mathcal{L}\mathcal{S}_{\mathcal{B}_1}$ and $\mathcal{L}\mathcal{S}_{\mathcal{A}_2}$
are pairwise distinct.

For each variety $\mathcal{L}\mathcal{S}_{\mathfrak{Z}_i}$ under consideration, we determine its Koszul dual operad in the sense of \cite{GK94}. We then describe all polynomial identities of degrees up to~$4$ satisfied by the commutator and anti-commutator operations. Moreover, we construct an explicit basis for each of the obtained Koszul dual operads. Since the Koszul dual of the left-symmetric operad is a perm operad (associative operad with a left-commutative identity), for $\mathcal{L}\mathcal{S}_{\mathfrak{Z}_i}$ and $\mathcal{L}\mathcal{S}_{\mathfrak{Z}_i}^{\;(!)}$, we describe the following diagram:
\begin{gather}\label{diagram}
\begin{picture}(400,90)
  \put(120,75){$\mathcal{L}\mathcal{S}$}
  \put(255,75){$\mathcal{\mathcal{P}}erm$}  
  \put(125,57){$\cup$}
  \put(256,57){$\cap$}
  \put(120,40){$\mathcal{L}\mathcal{S}_{\mathfrak{Z}_i}$}
  \put(250,40){$\mathcal{L}\mathcal{S}_{\mathfrak{Z}_i}^{\;(!)}$}
  \put(256,20){$\cap$}
  \put(256,5){$\mathcal{A}$}
  \put(175,43){\vector(1,0){55}}
  \put(210,43){\vector(-1,0){55}}
  \put(175,79){\vector(1,0){55}}
  \put(210,79){\vector(-1,0){55}}
\end{picture}
\end{gather}
Artin's theorem states that every alternative algebra generated by two elements is associative, i.e., the variety of binary associative algebras coincides with the variety of alternative algebras \cite{Artin}. Also, it was shown in \cite{binaryperm} that the variety of binary perm algebras (which we denote by $\mathcal{P}erm_2$) coincides with the variety of alternative algebras satisfying an additional identity of degree $3$, which is
\[
a(bc)+a(cb)=b(ac)+c(ab).
\]
In the cases $\mathfrak{Z}_i = \mathfrak{A}_1$ and $\mathfrak{Z}_i = \mathfrak{A}_2$, it turns out that the operad $\mathcal{L}\mathcal{S}_{\mathfrak{Z}_i}^{\;(!)}$ is an alternative satisfying an additional identity of degree $3$.
Moreover, in the case $\mathfrak{Z}_i = \mathfrak{A}_1$, the additional identity is again left-commutative. From the general observations, we obtain
\[
\mathcal{P}erm\subset \mathcal{L}\mathcal{S}_{\mathfrak{A}_1}^{\;(!)}\subset\mathcal{P}erm_2.
\]
Although this is not immediate, direct computations show that
\[
\mathcal{P}erm\subset \mathcal{L}\mathcal{S}_{\mathfrak{A}_2}^{\;(!)}\subset\mathcal{P}erm_2.
\]
Also, we shall see that
\[
\mathcal{P}erm\subset \mathcal{L}\mathcal{S}_{\mathfrak{B}_1}^{\;(!)}\not\subset\mathcal{P}erm_2.
\]
To compute the Koszul dual operad of $\mathcal{L}\mathcal{S}_{\mathfrak{Z}_i}$, we need the multilinear basis of degree $3$ of the algebra $\mathcal{L}\mathcal{S}\<a,b,c\>$. We use the following rewriting rules:
\[
a(bc)=(ab)c-(ba)c+b(ac),
\]
\[
a(cb)=(ac)b-(ca)b+c(ab)
\]
and
\[
b(ca)=(bc)a-(cb)a+c(ba).
\]
Let us compute the polarization of $\mathcal{L}\mathcal{S}$ following \cite{MarklRemm}; this construction will be used throughout the paper.

\begin{proposition}\label{polarization}
The polarization of $\mathcal{L}\mathcal{S}\<X\>$ is
\begin{gather}\label{pol1}
[[a,b],c]+[[b,c],a]+[[c,a],b]=0    
\end{gather}
and
\begin{gather}\label{pol2}
\{\{a,b\},c\}=-\{[a,b],c\}-2\{[a,c],b\}+[\{a,b\},c]-[[a,c],b]+\{a,\{b,c\}\}-\{a,[b,c]\}+[a,\{b,c\}].
\end{gather}
\end{proposition}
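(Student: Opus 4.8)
The plan is to carry out the Markl--Remm polarization directly. Write the single product as the sum of its skew-symmetric and symmetric parts, $ab=\tfrac12\big([a,b]+\{a,b\}\big)$ with $[a,b]=ab-ba$ and $\{a,b\}=ab+ba$; this is an invertible change of generators of the free operad, so the defining relation of $\mathcal{L}\mathcal{S}$ --- the symmetry of the associator $(a,b,c)=(b,a,c)$ --- is carried isomorphically onto a relation space in the new generators, and it suffices to exhibit a spanning set of that image. As an $S_3$-module the degree-$3$ relation space of $\mathcal{L}\mathcal{S}$ is three-dimensional, and a short character computation shows it decomposes as the sign representation plus the standard two-dimensional representation. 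I would therefore produce one identity for each isotypic component: \eqref{pol1} for the sign part and \eqref{pol2} for the standard part.

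For \eqref{pol1} I would isolate the sign component by fully antisymmetrising, i.e. by forming the cyclic sum $[[a,b],c]+[[b,c],a]+[[c,a],b]$. Expanding each bracket and rewriting every right-normed product via $x(yz)=(xy)z-(x,y,z)$, all the resulting left-normed products cancel in the cyclic sum, leaving exactly $\sum_{\mathrm{cyc}}\big((a,b,c)-(b,a,c)\big)$; the three summands then vanish one by one under the left-symmetry $(x,y,z)=(y,x,z)$. This is the classical fact that a left-symmetric algebra is Lie-admissible, and it is the clean half of the proof.

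For \eqref{pol2} I would isolate the standard component by solving for the totally symmetric generator $\{\{a,b\},c\}$. Expanding the left-hand side gives $\{\{a,b\},c\}=(ab)c+(ba)c+c(ab)+c(ba)$, and expanding each of the seven polarised monomials on the right-hand side back into the underlying product reduces the claimed identity to a single linear relation among the twelve non-associative monomials of degree $3$. I would then reduce both sides to the normal-form basis of $\mathcal{L}\mathcal{S}\<a,b,c\>$ --- the six left-combs together with three right-combs --- using the rewriting rules $a(bc)=(ab)c-(ba)c+b(ac)$, $a(cb)=(ac)b-(ca)b+c(ab)$ and $b(ca)=(bc)a-(cb)a+c(ba)$ together with their analogues coming from $x(yz)=(xy)z-(yx)z+y(xz)$, and check that all nine coefficients agree, so that the difference of the two sides lies in the relation ideal.

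The main obstacle is \eqref{pol2}: unlike the Jacobiator, the symmetric component carries no antisymmetry to force wholesale cancellation, so one must expand all seven mixed brackets on the right-hand side, eliminate the three dependent right-combs through the rewriting rules, and match nine coefficients by hand. The only conceptual point beyond this bookkeeping is completeness --- that \eqref{pol1} and \eqref{pol2} span the full image of the relation space --- which follows once the sign and standard components are each accounted for, since $1+2=3$ equals the dimension of the degree-$3$ relations of $\mathcal{L}\mathcal{S}$.
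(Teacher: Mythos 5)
Your argument is correct and rests on the same computational core as the paper's --- the substitution $ab=\tfrac12([a,b]+\{a,b\})$ followed by linear algebra in the $12$-dimensional space of polarized degree-$3$ monomials --- but it is organized differently. The paper writes the three instances of the left-symmetric relation as rows of a $3\times 12$ matrix over an ordered monomial basis with $[[\cdot,\cdot],\cdot]>[\{\cdot,\cdot\},\cdot]>\{[\cdot,\cdot],\cdot\}>\{\{\cdot,\cdot\},\cdot\}$ and row-reduces to echelon form; the leading terms of the resulting rows are $[[a,b],c]$ and $\{\{a,b\},c\}$, so \eqref{pol1} and \eqref{pol2} are read off directly, and completeness is automatic because row reduction preserves the row space. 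You instead verify the two stated identities (the classical Lie-admissibility computation for \eqref{pol1}, a reduction of both sides to the nine-element normal-form basis for \eqref{pol2}) and then argue completeness separately via the $S_3$-decomposition of the relation space as $\mathrm{sgn}\oplus\mathrm{std}$. This buys a conceptual explanation of why exactly these two identities suffice, at the cost of one step you leave implicit: to conclude that the orbit of \eqref{pol2} accounts for the standard component, you must check that the relation $\{\{a,b\},c\}-\mathrm{RHS}$ has nonzero projection onto $\mathrm{std}$, i.e.\ does not lie in the sign-isotypic line spanned by the Jacobiator --- immediate here, since the Jacobiator has zero coefficient on $\{\{a,b\},c\}$, but it should be said. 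With that sentence added, both routes are sound and of comparable length.
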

\begin{proof}
Firstly, we rewrite each identity of $\mathcal{LS}$ using commutators and anti-commutators as follows:
\[
ab=1/2([a,b]+\{a,b\}).
\]
We define an order on monomials with operations $[\cdot,\cdot]$ and $\{\cdot,\cdot\}$ by
\begin{itemize}
    \item $[[\cdot,\cdot],\cdot]>[\{\cdot,\cdot\},\cdot]>\{[\cdot,\cdot],\cdot\}>\{\{\cdot,\cdot\},\cdot\}$;
    \item the remaining monomials are ordered by lexicographical order;
\end{itemize}
Let $\mathcal{R}$ be a set of relations in $\mathcal{LS}$ in terms of $[\cdot,\cdot]$ and $\{\cdot,\cdot\}$, i.e., we have
\begin{multline*}
(ab)c-a(bc)+b(ac)-(ba)c=1/4([[a,b],c]+[\{a,b\},c]+\{[a,b],c\}+\{\{a,b\},c\}\\
+[[b,c],a]-\{[b,c],a\}+[\{b,c\},a]-\{\{b,c\},a\}\\
+[[a,b],c]-[\{a,b\},c]+\{[a,b],c\}-\{\{a,b\},c\}\\
-[[a,c],b]+\{[a,c],b\}-[\{a,c\},b]+\{\{a,c\},b\}),
\end{multline*}
\begin{multline*}
(ac)b-a(cb)-(ca)b+c(ab)=1/4([[a,c],b]+[\{a,c\},b]+\{[a,c],b\}+\{\{a,c\},b\}\\
-[[b,c],a]+\{[b,c],a\}+[\{b,c\},a]-\{\{b,c\},a\}\\
+[[a,c],b]-[\{a,c\},b]+\{[a,c],b\}-\{\{a,c\},b\}\\
-[[a,b],c]+\{[a,b],c\}-[\{a,b\},c]+\{\{a,b\},c\})),\\  
\end{multline*}
and
\begin{multline*}
(bc)a-b(ca)-(cb)a+c(ba)=1/4([[b,c],a]+[\{b,c\},a]+\{[b,c],a\}+\{\{b,c\},a\}\\
-[[a,c],b]+\{[a,c],b\}+[\{a,c\},b]-\{\{a,c\},b\}\\
+[[b,c],a]-[\{b,c\},a]+\{[b,c],a\}-\{\{b,c\},a\}\\
+[[a,b],c]-\{[a,b],c\}-[\{a,b\},c]+\{\{a,b\},c\}).\\
\end{multline*}
For the set $\mathcal{R}$, we construct a matrix $[\mathcal{R}]$ representing the identities as row vectors relative to the given order.
\[ \left(
\begin{array}{cccccccccccc}
2/4 & -1/4 &  1/4 &  0 & -1/4 &  1/4 &  2/4 &  1/4 & -1/4 &  0 &  1/4 & -1/4\\
-1/4 &  2/4 & -1/4 & -1/4 &  0 &  1/4 &  1/4 &  2/4 &  1/4 &  1/4 &  0 & -1/4\\
 1/4 & -1/4 &  2/4 & -1/4 &  1/4 &  0 & -1/4 &  1/4 &  2/4 &  1/4 & -1/4 &  0
\end{array} \right)
\]
By applying Gröbner basis theory, the matrix $[\mathcal{R}]$ can be brought to an echelon form, and we obtain the result.
\end{proof}

Since there is a one-to-one correspondence between a variety of algebras
defined by identities of degrees two and three $\Var$ and the associated quadratic operad,
we will use the same terminology for both objects throughout the paper. We consider all algebras over a field $K$ of characteristic $0$.

\section{$\mathfrak{A}_1$-left-symmetric algebras}

In this section, we consider algebras from the variety $\mathcal{L}\mathcal{S}_{\mathfrak{A}_1}$.  Let us compute the dual operad for $\mathcal{L}\mathcal{S}_{\mathfrak{A}_1}$. The Lie-admissibility condition for $S\otimes U$ gives the defining identities of the operad $\mathcal{L}\mathcal{S}_{\mathfrak{A}_1}^{\;\;(!)}$, where $S$ is an algebra from $\mathcal{L}\mathcal{S}_{\mathfrak{A}_1}$. So, we have
\begin{multline*}
[[a\otimes u,b\otimes v],c\otimes w]=(ab)c\otimes (uv)w-(ba)c\otimes (vu)w-
c(ab)\otimes w(uv)+c(ba)\otimes w(vu)=\\
(-(ba)c-(ac)b-(ca)b-(bc)a-(cb)a)\otimes (uv)w-(ba)c\otimes (vu)w-
c(ab)\otimes w(uv)+c(ba)\otimes w(vu),
\end{multline*}
\begin{multline*}
[[b\otimes v,c\otimes w],a\otimes u]=(bc)a\otimes (vw)u-(cb)a\otimes (wv)u-
a(bc)\otimes u(vw)+a(cb)\otimes u(wv)=\\
(bc)a\otimes (vw)u-(cb)a\otimes (wv)u-
(-2(ba)c-(ac)b-(ca)b-(bc)a-(cb)a+b(ac))\otimes u(vw)\\
+((ac)b-(ca)b+c(ab))\otimes u(wv)
\end{multline*}
and
\begin{multline*}
[[c\otimes w,a\otimes u],b\otimes v]=(ca)b\otimes (wu)v-(ac)b\otimes (uw)v-
b(ca)\otimes v(wu)+b(ac)\otimes v(uw)=\\
 (ca)b\otimes (wu)v-(ac)b\otimes (uw)v-
((bc)a-(cb)a+c(ba))\otimes v(wu)+b(ac)\otimes v(uw).
\end{multline*}
The sum of the same elements on the left side of the tensors yields the following result: 
\begin{proposition}
The operad $\mathcal{L}\mathcal{S}_{\mathfrak{A}_1}^{\;\;(!)}$ is the alternative operad with a left-commutative identity.
\end{proposition}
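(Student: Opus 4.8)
The plan is to read the defining relations of $\mathcal{L}\mathcal{S}_{\mathfrak{A}_1}^{\;\;(!)}$ off the three bracket expansions computed above, using the stated principle that the Jacobi (Lie-admissibility) identity for the commutator on $S\otimes U$ must hold identically. The key input is the degree-$3$ multilinear basis of $\mathcal{L}\mathcal{S}_{\mathfrak{A}_1}\langle a,b,c\rangle$: after applying the three rewriting rules and then $\mathfrak{A}_1$ (which eliminates $(ab)c$), the surviving monomials are the eight elements $(ba)c$, $(ac)b$, $(ca)b$, $(bc)a$, $(cb)a$, $b(ac)$, $c(ab)$, $c(ba)$, and these are linearly independent in $S$.

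Because the left-hand tensor factors range over a linearly independent set, the vanishing of the cyclic sum $[[a\otimes u,b\otimes v],c\otimes w]+[[b\otimes v,c\otimes w],a\otimes u]+[[c\otimes w,a\otimes u],b\otimes v]$ forces the coefficient of each of the eight monomials---each a bilinear expression in the right-factor operations on $u,v,w$---to vanish as an identity in $U$. I would therefore collect, from the three displayed expansions, the coefficient of each basis monomial, obtaining eight relations for $U$.

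Identifying these relations is the decisive step. The coefficients of $b(ac)$, $c(ab)$ and $c(ba)$ each give left-commutativity $u(vw)=v(uw)$. The coefficient of $(ba)c$ gives $(uv)w+(vu)w=2\,u(vw)$, which modulo left-commutativity is the left-alternative law, while the coefficient of $(ac)b$ gives $(uv)w+(uw)v=u(vw)+u(wv)$, the right-alternative law. Together these show that $U$ is an alternative algebra satisfying a left-commutative identity, which is the assertion.

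The main obstacle is to confirm that the three remaining relations---the coefficients of $(ca)b$, $(bc)a$ and $(cb)a$---impose nothing beyond alternativity and left-commutativity, so that the eight extracted relations cut out exactly the claimed operad. Rather than manipulating each identity by hand, I expect to settle this by a dimension count in the spirit of Proposition~\ref{polarization}: reduce the eight relations to row-echelon form and check that their common solution space in arity $3$ is $4$-dimensional. This matches the Koszul-duality identity $\dim\mathcal{L}\mathcal{S}_{\mathfrak{A}_1}(3)+\dim\mathcal{L}\mathcal{S}_{\mathfrak{A}_1}^{\;\;(!)}(3)=12$ with $\dim\mathcal{L}\mathcal{S}_{\mathfrak{A}_1}(3)=8$; since the variety of alternative left-commutative algebras also has arity-$3$ dimension $4$, the two operads coincide. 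Equivalently, one checks directly that the three leftover relations are linear combinations of the left-alternative, right-alternative and left-commutative relations, which is the only genuinely computational point.
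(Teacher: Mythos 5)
Your proposal follows essentially the same route as the paper: expand the three cyclic Jacobi terms for $S\otimes U$, rewrite the left tensor factors in the eight-element degree-$3$ basis of $\mathcal{L}\mathcal{S}_{\mathfrak{A}_1}\langle a,b,c\rangle$, and read off the relations on $U$ by collecting coefficients of these linearly independent monomials; your identifications of the resulting relations (left-commutativity from $b(ac)$, $c(ab)$, $c(ba)$, and the two alternative laws from $(ba)c$ and $(ac)b$) are correct. The closing dimension count ($8+4=12$) is a sensible way to make explicit the final verification that the paper leaves implicit in the phrase ``the sum of the same elements on the left side of the tensors yields the result.''
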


\begin{proposition}
The polarization of $\mathcal{L}\mathcal{S}_{\mathfrak{A}_1}\<X\>$ is defined by (\ref{pol1}), (\ref{pol2}) and
\begin{multline*}
\{a,\{b,c\}\}=\{[a,b],c\}+\{[a,c],b\}-3/2[\{a,b\},c]-3/2[\{a,c\},b]\\
+3/2[[a,c],b]-1/3[a,\{b,c\}]+1/3[a,[b,c]].
\end{multline*}
\end{proposition}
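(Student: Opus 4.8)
The plan is to follow the strategy of Proposition~\ref{polarization} verbatim. Since $\mathcal{L}\mathcal{S}_{\mathfrak{A}_1}$ is the variety $\mathcal{L}\mathcal{S}$ augmented by the single identity $\mathfrak{A}_1$ of \eqref{as1}, its polarized defining relations are exactly those of $\mathcal{L}\mathcal{S}$ (already encoded in the matrix $[\mathcal{R}]$, whose echelon form produced \eqref{pol1} and \eqref{pol2}) together with the polarization of \eqref{as1}. So the first thing I would do is rewrite \eqref{as1} in terms of $[\cdot,\cdot]$ and $\{\cdot,\cdot\}$ using $ab=\tfrac12([a,b]+\{a,b\})$, and I would record it as a new row in the $12$ columns fixed in Proposition~\ref{polarization}.

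The observation that makes this step clean is that \eqref{as1} is invariant under transposing the two factors inside each product. Grouping the six terms into the pairs $(ab)c+(ba)c$, $(ac)b+(ca)b$, $(bc)a+(cb)a$ and using $xy+yx=\{x,y\}$ together with $\{x,y\}\,z=\tfrac12([\{x,y\},z]+\{\{x,y\},z\})$, the entire relation collapses to
\begin{gather*}
[\{a,b\},c]+[\{a,c\},b]+[\{b,c\},a]+\{\{a,b\},c\}+\{\{a,c\},b\}+\{\{b,c\},a\}=0 .
\end{gather*}
In particular the polarized $\mathfrak{A}_1$-relation involves only monomials containing at least one anti-commutator; this is expected, since antisymmetrizing \eqref{as1} annihilates the commutator-only part, so the new information sits entirely in the $\{\cdot,\cdot\}$-sector.

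Next I would append this row to $[\mathcal{R}]$ and bring the enlarged matrix to echelon form with respect to the same monomial order as in Proposition~\ref{polarization}. Because $\mathfrak{A}_1$ is a genuine restriction of $\mathcal{L}\mathcal{S}$ (the three varieties are pairwise distinct, as noted in the introduction), the appended row is not in the row space of $[\mathcal{R}]$, so \eqref{pol1} and \eqref{pol2} persist unchanged and exactly one further relation is forced. Solving that relation for $\{a,\{b,c\}\}$, and using $[x,y]=-[y,x]$, $\{x,y\}=\{y,x\}$ to convert the right-normed monomials into the chosen normal form, produces the displayed identity.

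The hard part will be purely the bookkeeping of this elimination: one has to carry twelve rational columns through several row operations and then translate the resulting left-normed echelon row back into the stated form $\{a,\{b,c\}\}=\dots$, where a single mishandled sign in conversions such as $[a,\{b,c\}]=-[\{b,c\},a]$ or $[a,[b,c]]=-[[b,c],a]$ would corrupt the final coefficients. To guard against this I would finish by checking that the claimed right-hand side, rewritten back in the twelve-column basis, indeed lies in the span of the rows of $[\mathcal{R}]$ together with the polarized $\mathfrak{A}_1$-relation, thereby confirming that the identity is a genuine consequence of the defining relations of $\mathcal{L}\mathcal{S}_{\mathfrak{A}_1}$.
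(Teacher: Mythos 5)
Your proposal is correct and takes exactly the route the paper intends: its proof of this proposition is literally ``analogous to Proposition~\ref{polarization}'', i.e.\ append the polarized extra identity to $[\mathcal{R}]$ and reduce to echelon form. Your polarization of \eqref{as1}, namely $[\{a,b\},c]+[\{a,c\},b]+[\{b,c\},a]+\{\{a,b\},c\}+\{\{a,c\},b\}+\{\{b,c\},a\}=0$, is right, and the rank count ($\dim\mathcal{LS}_{\mathfrak{A}_1}(3)=8=12-4$) confirms that exactly one new relation, with leading term $\{a,\{b,c\}\}$, is produced.
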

\begin{proof}
The proof is analogous to Proposition \ref{polarization}.
\end{proof}

Also, from direct computations, we obtain the following results:
\begin{theorem}
All identities of the algebra $\mathcal{L}\mathcal{S}_{\mathfrak{A}_1}^{\;(-)}\<X\>$ up to degree $4$ follow from the anti-commu\-tativity and Jacobi identities.
\end{theorem}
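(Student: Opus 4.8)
The plan is to read the statement as an equality of two Lie-type varieties. Since every left-symmetric algebra is Lie-admissible, and this is already recorded by relation (\ref{pol1}) of Proposition \ref{polarization}, the commutator $[a,b]=ab-ba$ on any algebra of $\mathcal{L}\mathcal{S}_{\mathfrak{A}_1}$ is anti-commutative and satisfies the Jacobi identity; hence every consequence of these two identities holds in $\mathcal{L}\mathcal{S}_{\mathfrak{A}_1}^{\;(-)}\<X\>$. The content of the theorem is the opposite inclusion in low degree: no identity of degree at most $4$ in $[\,\cdot\,,\,\cdot\,]$ is valid in $\mathcal{L}\mathcal{S}_{\mathfrak{A}_1}$ beyond those forced by anti-commutativity and Jacobi. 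Because the base field has characteristic $0$, it suffices to treat multilinear identities, so the claim is equivalent to the injectivity of the canonical map $\Lie(n)\to\mathcal{L}\mathcal{S}_{\mathfrak{A}_1}(n)$ induced by the commutator for $n=2,3,4$.

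First I would reduce the problem to a rank computation. The multilinear component of the free Lie algebra in degree $n$ has dimension $(n-1)!$, and I fix the left-normed basis $[[\cdots[x_1,x_{i_2}],\cdots],x_{i_n}]$, where $(i_2,\dots,i_n)$ runs over the permutations of $(2,\dots,n)$. Injectivity of $\Lie(n)\to\mathcal{L}\mathcal{S}_{\mathfrak{A}_1}(n)$ is then equivalent to the linear independence, inside the multilinear degree-$n$ part of $\mathcal{L}\mathcal{S}_{\mathfrak{A}_1}\<X\>$, of the images of these $(n-1)!$ brackets. To compute these images I expand each bracket into a sum of products, push every product into left-normed form by iterating the rewriting rule $a(bc)=(ab)c-(ba)c+b(ac)$ and the two analogous rules stated before Proposition \ref{polarization}, and then reduce modulo the defining relation $\mathfrak{A}_1$ in (\ref{as1}) together with its degree-$4$ consequences. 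The case $n=2$ is immediate since $\dim\Lie(2)=1$.

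For $n=3$ I would record that relation (\ref{as1}) is the full symmetrization of $(ab)c$ over $S_3$ and is in particular invariant under transposing its first two arguments, whereas the two basis brackets $[[a,b],c]$ and $[[a,c],b]$ are genuinely antisymmetric in a pair of arguments; expanding both and forming the $2\times k$ coefficient matrix over the reduced monomial basis, one checks that it has rank $2$, so there is no new identity in degree $3$. For $n=4$ I would carry out the analogous expansion of the six left-normed brackets and compute the rank of the resulting $6\times k$ coefficient matrix over a fixed monomial basis of the multilinear part of $\mathcal{L}\mathcal{S}_{\mathfrak{A}_1}\<a,b,c,d\>$; obtaining rank $6$ shows that the only multilinear degree-$4$ identities are the consequences of Jacobi, which is exactly the assertion.

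The main obstacle is the degree-$4$ step. It requires first pinning down an explicit monomial basis of the multilinear component of $\mathcal{L}\mathcal{S}_{\mathfrak{A}_1}\<a,b,c,d\>$, obtained from the Gröbner/rewriting analysis of the operad by reducing all right-nested products to left-normed ones and then quotienting by the ideal generated by $\mathfrak{A}_1$, and then reducing each of the six brackets to that basis. The bookkeeping is lengthy but entirely mechanical once the rewriting system is in place; the single genuine check is that the $6\times k$ coefficient matrix attains full row rank $6$. Since this is precisely a finite linear-algebra verification, the theorem follows by direct computation.
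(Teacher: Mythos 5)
Your proposal is correct and amounts to the same argument the paper relies on: the paper gives no written proof beyond the phrase ``from direct computations'' (i.e., a computer-algebra rank check), and your reduction --- Lie-admissibility via (\ref{pol1}) giving one inclusion, then injectivity of $\mathcal{L}ie(n)\to\mathcal{L}\mathcal{S}_{\mathfrak{A}_1}(n)$ for $n\le 4$ verified by expanding the $(n-1)!$ left-normed brackets and checking full row rank of the coefficient matrix modulo the consequences of (\ref{as1}) --- is precisely what such a computation carries out. The only caveat is that the final rank verification in degree $4$ must actually be performed (by hand or by software such as Albert), exactly as in the paper.
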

\begin{theorem}
All identities of the algebra $\mathcal{L}\mathcal{S}_{\mathfrak{A}_1}^{\;(+)}\<X\>$ up to degree $4$ follow from the commutati\-vity identity.
\end{theorem}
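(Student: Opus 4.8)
The plan is to read the statement operadically. The identities satisfied by the anti-commutator $\{a,b\}=ab+ba$ on $\mathcal{L}\mathcal{S}_{\mathfrak{A}_1}\langle X\rangle$ are precisely the kernel of the operad morphism
\[
\Phi\colon \mathcal{C}om\mathrm{Mag}\longrightarrow \mathcal{L}\mathcal{S}_{\mathfrak{A}_1}
\]
from the free commutative non-associative (magmatic) operad $\mathcal{C}om\mathrm{Mag}$ — one symmetric binary generator and no further relations — that sends the generator to $\{\cdot,\cdot\}$. Because $\{\cdot,\cdot\}$ is symmetric, the image of $\Phi$ already satisfies commutativity, so the operad governing the $(+)$-identities is a quotient of $\mathcal{C}om\mathrm{Mag}$. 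Hence the assertion that every identity of $\mathcal{L}\mathcal{S}_{\mathfrak{A}_1}^{\,(+)}\langle X\rangle$ up to degree $4$ follows from commutativity is equivalent to $\Phi$ being injective in all arities $n\le 4$.

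First I would record the multilinear dimensions of $\mathcal{C}om\mathrm{Mag}$. Counting binary trees with unordered children and labelled leaves gives $\dim\mathcal{C}om\mathrm{Mag}(n)=(2n-3)!!$, that is $1,1,3,15$ for $n=1,2,3,4$; equivalently the associated exponential generating function is $1-\sqrt{1-2x}$. Since the image of $\Phi$ is a quotient of $\mathcal{C}om\mathrm{Mag}$, these numbers are upper bounds for the dimension of the image of $\Phi$ in each arity, and it remains only to prove the reverse inequality: that the $(2n-3)!!$ commutative-tree monomials are sent by $\Phi$ to linearly independent elements of $\mathcal{L}\mathcal{S}_{\mathfrak{A}_1}\langle X\rangle$ for $n\le 4$.

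This reverse inequality is a rank computation performed inside a fixed monomial basis of $\mathcal{L}\mathcal{S}_{\mathfrak{A}_1}\langle X\rangle$. Using the rewriting rules of the Introduction together with the defining relation \eqref{as1}, I would fix such a basis in arities $3$ and $4$: the degree-$3$ component of $\mathcal{L}\mathcal{S}\langle a,b,c\rangle$ has the nine-monomial basis exhibited in the Introduction, and \eqref{as1}, being the full symmetrisation of $(ab)c$ and hence a trivial $S_3$-summand, cuts it down to $8$; the degree-$4$ component starts from the free left-symmetric dimension $64$ and is reduced by the $S_4$-span of the arity-$3$ relation inserted into degree $4$. Substituting $\{u,v\}=uv+vu$ and reducing to normal form, the three arity-$3$ monomials $\{\{a,b\},c\},\{\{a,c\},b\},\{\{b,c\},a\}$ and the fifteen arity-$4$ monomials — twelve of caterpillar shape $\{x,\{y,\{z,w\}\}\}$ and three of balanced shape $\{\{a,b\},\{c,d\}\}$ — each yield an explicit coefficient vector, and I would verify that the resulting $3$-row and $15$-row matrices have full rank.

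The arity-$2$ and arity-$3$ checks are immediate, so the main obstacle will be the arity-$4$ rank computation, where fifteen lengthy expansions must be reduced and shown to be independent. To keep this tractable I would exploit the $S_4$-equivariance of $\Phi$: decomposing $\mathcal{C}om\mathrm{Mag}(4)$ and $\mathcal{L}\mathcal{S}_{\mathfrak{A}_1}(4)$ into isotypic components and testing injectivity one irreducible at a time breaks the $15$-dimensional independence check into a few small blocks. Once full rank is confirmed, the image of $\Phi$ has dimension exactly $(2n-3)!!$ for each $n\le 4$, meeting the upper bound; hence $\Phi$ is injective through degree $4$, and the only identities of $\mathcal{L}\mathcal{S}_{\mathfrak{A}_1}^{\,(+)}\langle X\rangle$ up to degree $4$ are consequences of commutativity.
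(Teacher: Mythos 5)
Your proposal is correct and amounts to the same argument the paper uses: the paper's ``proof'' is simply the assertion that direct (computer-assisted) computation verifies the statement, and your operadic reformulation --- injectivity of the map from the free commutative magmatic operad, checked by showing the $1,1,3,15$ tree monomials have linearly independent images in $\mathcal{L}\mathcal{S}_{\mathfrak{A}_1}(n)$ for $n\le 4$ --- is precisely the finite rank computation that such a verification carries out. The only thing you leave undone is the actual arity-$4$ rank check inside the $45$-dimensional space $\mathcal{L}\mathcal{S}_{\mathfrak{A}_1}(4)$, which is also the step the paper delegates to software.
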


Let $\mathcal{L}\mathcal{S}_{\mathfrak{A}_1}^{\;\;(!)}\<X\>$ be a free algebra of the variety $\mathcal{L}\mathcal{S}_{\mathfrak{A}_1}^{\;\;(!)}$.
\begin{lemma}\label{lemmaA1}
In algebra $\mathcal{L}\mathcal{S}_{\mathfrak{A}_1}^{\;\;(!)}\<X\>$ the following identities hold:
\[
((ba)c)d=((ac)b)d,
\]
\[
(((ab)c)d)e=(((ac)b)d)e
\]
and
\[
(((ab)c)d)e=(((ab)d)c)e.
\]
\end{lemma}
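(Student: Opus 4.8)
The plan is to work entirely inside the operad identified in the preceding proposition, namely the alternative operad with the left-commutative identity $(xy)z=(yx)z$, and to use the standard fact that in any alternative algebra the associator $(x,y,z)=(xy)z-x(yz)$ is an alternating function of its arguments. First I would record two working tools. Combining left-commutativity with the linearized left-alternative law gives the rewriting rule $(xy)z=\tfrac12\bigl(x(yz)+y(xz)\bigr)$, and combining left-commutativity with the right-alternative law reproduces the degree-$3$ relation $x(yz)+x(zy)=y(xz)+z(xy)$ already quoted in the introduction. I would also note the consequence $2(x,y,z)=y(xz)-x(yz)$ of left-commutativity. Since $(ba)c=(ab)c$ by left-commutativity, the first identity of the lemma is equivalent to $((ab)c)d=((ac)b)d$, i.e.\ to interchanging the second and third arguments of a left-normed product; and I will show that the two remaining identities are formal consequences of this one, so the whole content is concentrated in the single degree-$4$ identity.

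For that degree-$4$ identity I would apply the rewriting rule repeatedly to express $((ab)c)d$ and $((ac)b)d$ as sums of right-combs $x(y(zw))$. Using that $(xy)z$ is symmetric in $x,y$ one gets the cancellations $(bc)d=(cb)d$ and $(bc)(ad)=(cb)(ad)$, after which the difference $((ab)c)d-((ac)b)d$ collapses to a multiple of $a\,(b,c,d)-(b,c,ad)$. Thus the identity reduces to the single associator relation $(b,c,ad)=a\,(b,c,d)$. Everything up to this point uses only left-commutativity and the left-alternative half of the rewriting rule, and over those relations the right-combs stay independent, so the computation necessarily stalls here; in fact the corresponding degree-$3$ statement $(ab)c=(ac)b$ is false, which confirms that no degree-$3$ argument can suffice. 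The main obstacle, and the genuinely degree-$4$ step, is to close this last equality, which I would do by invoking right-alternativity through the binary-perm relation above. This is the one place where the full strength of alternativity, and not merely the left-alternative law absorbed into the rewriting rule, is actually used.

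Finally I would deduce the two degree-$5$ identities from the degree-$4$ one, now read as the law $((pq)r)s=((pr)q)s$ valid for all elements $p,q,r,s$. Multiplying this law on the right by $e$ yields $(((ab)c)d)e=(((ac)b)d)e$, the second identity. Specializing instead the leading argument to the composite element $p=ab$, with $q=c$, $r=d$, $s=e$, yields $(((ab)c)d)e=(((ab)d)c)e$, the third identity. In both cases the substitution is legitimate because the degree-$4$ identity, once established, holds for arbitrary elements of the free algebra, so no further computation is required; the effect is that a left-normed product of length at most $5$ is symmetric in all but its last argument.
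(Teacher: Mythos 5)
The paper proves this lemma only by computer verification in Albert, so a correct hand proof would be a genuinely different and worthwhile route; unfortunately, yours rests on a misidentification of the operad you are working in. In $\mathcal{L}\mathcal{S}_{\mathfrak{A}_1}^{\;(!)}$ the ``left-commutative identity'' accompanying alternativity is the commutativity of left multiplications, $x(yz)=y(xz)$, not $(xy)z=(yx)z$: this is what the paper combines with left-alternativity to get $a(bc)=\tfrac12\bigl((ab)c+(ba)c\bigr)$ (the $\tfrac12$-Zinbiel rewriting of right-normed words into left-normed ones), and it is forced by the stated bases, in which $(x_{l_1}x_{l_2})x_{l_3}$ and $(x_{l_2}x_{l_1})x_{l_3}$, and likewise $((x_{s_1}x_{s_2})x_{s_3})x_{s_4}$ and $((x_{s_2}x_{s_1})x_{s_3})x_{s_4}$, occur as \emph{distinct} basis elements. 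So $(ba)c\neq(ab)c$ in this operad; your rewriting rule $(xy)z=\tfrac12\bigl(x(yz)+y(xz)\bigr)$ goes in the wrong direction; the consequence $2(x,y,z)=y(xz)-x(yz)$ is unavailable; and your very first move --- replacing the lemma's identity $((ba)c)d=((ac)b)d$ by $((ab)c)d=((ac)b)d$ --- is illegitimate.

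This is not a repairable slip, because the identity you then set out to prove is false. The lemma's degree-$4$ identity cyclically permutes the first three arguments of a left-normed word (an even permutation), whereas your target $((ab)c)d=((ac)b)d$ is the transposition of arguments $2$ and $3$; together they would generate the full symmetric group on the first three positions, identify $((x_{s_1}x_{s_2})x_{s_3})x_{s_4}$ with $((x_{s_2}x_{s_1})x_{s_3})x_{s_4}$, and collapse the five-element basis $\mathfrak{A}_1^{(!)}(4)$, contradicting $\dim\mathcal{L}\mathcal{S}_{\mathfrak{A}_1}^{\;(!)}(4)=5$. The degree-$5$ step inherits the problem: right-multiplying the true $3$-cycle identity by $e$ only gives invariance under the alternating group on the first three letters, which does not contain the transpositions $(((ab)c)d)e=(((ac)b)d)e$ and $(((ab)c)d)e=(((ab)d)c)e$; and specializing $p=ab$ in the $3$-cycle $((pq)r)s=((qr)p)s$ produces $((cd)(ab))e$, not a left-normed word. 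Finally, even on its own terms the argument leaves its one substantive step --- the reduction to, and verification of, $(b,c,ad)=a\,(b,c,d)$ from the binary-perm relation --- entirely unperformed. A genuine hand proof would have to start from $x(yz)=y(xz)$, the rewriting $a(bc)=\tfrac12\bigl((ab)c+(ba)c\bigr)$ and the alternating associator, and derive the stated degree-$4$ and degree-$5$ identities by explicit reduction to left-normed words.
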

\begin{proof}
It can be proved using computer algebra as software programs  Albert \cite{Albert}. 
\end{proof}

Let $\mathfrak{A}_1^{(!)}(i)$ be a set which is defined for each $i$ as follows:
\[
\mathfrak{A}_1^{(!)}(1)=\{x_{k_1}\},\;\mathfrak{A}_1^{(!)}(2)=\{x_{k_1}x_{k_2}\},
\]
\[
\mathfrak{A}_1^{(!)}(3)=\{
(x_{l_1}x_{l_2})x_{l_3},(x_{l_1}x_{l_3})x_{l_2},
(x_{l_2}x_{l_1})x_{l_3},
x_{l_3}(x_{l_2}x_{l_1}),
\},
\]
\[
\mathfrak{A}_1^{(!)}(4)=\{((x_{s_1}x_{s_2})x_{s_3})x_{s_4},((x_{s_1}x_{s_2})x_{s_4})x_{s_3},((x_{s_1}x_{s_3})x_{s_4})x_{s_2},((x_{s_2}x_{s_1})x_{s_3})x_{s_4},((x_{s_2}x_{s_3})x_{s_4})x_{s_1}
\}
\]
and
\[
\mathfrak{A}_1^{(!)}(n)=\{((\cdots((x_{i_1}x_{i_2})x_{i_3})\cdots )x_{i_{n-1}})x_{i_n}\},
\]
where $l_1\leq l_2\leq l_3$, $s_1\leq s_2\leq s_3\leq s_4$, $i_1\leq i_2\leq\cdots \leq i_{n-1}$ and $n\geq 5$. Also, we define 
\[
\overline{\mathfrak{A}^{(!)}}_1=\bigcup_{i\geq 1}\mathfrak{A}_1^{(!)}(i)
\]
\begin{theorem}
The set $\overline{\mathfrak{A}^{(!)}}_1$ is a basis of algebra $\mathcal{L}\mathcal{S}_{\mathfrak{A}_1}^{\;\;(!)}\<X\>$.
\end{theorem}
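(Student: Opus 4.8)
The plan is to prove that $\overline{\mathfrak{A}^{(!)}}_1$ is a basis by establishing separately that it spans $\mathcal{L}\mathcal{S}_{\mathfrak{A}_1}^{\;\;(!)}\<X\>$ and that it is linearly independent. By the standard correspondence between a free algebra and the $S_n$-coinvariants of its operad, the non-decreasing index conditions $i_1\le\cdots\le i_{n-1}$ are precisely the bookkeeping for the passage to coinvariants, so it is enough to treat the multilinear component $\mathcal{L}\mathcal{S}_{\mathfrak{A}_1}^{\;\;(!)}(n)$ in each arity $n$. Throughout I would use three families of relations as rewriting rules: the left and right alternative laws (equivalently, the alternating associator), the left-commutative identity $a(bc)=b(ac)$, and the three identities of Lemma \ref{lemmaA1} together with their operadic consequences.

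For spanning I would argue by induction on the arity $n$. The cases $n\le 4$ are finite and can be checked by direct computation (as in Lemma \ref{lemmaA1}, for instance with Albert), reducing every bracketing and ordering of $n$ letters to the listed representatives; here one already sees the genuinely non-left-normed survivor $x_{l_3}(x_{l_2}x_{l_1})$ in arity $3$. For $n\ge 5$ the reduction has two stages. First, one rewrites any monomial as a combination of left-normed words of the same arity: applying the three families of relations lowers a suitable well-founded measure of the right-nestedness of the bracketing, and the arity-$4$ computation guarantees that no non-left-normed term survives above degree $3$. Second, one sorts the first $n-1$ letters of a left-normed word, and this is where Lemma \ref{lemmaA1} is decisive. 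Substituting a left-normed word into the leftmost slot and appending letters on the right turns the three identities into exact equalities between left-normed words of arity $n$: the arity-$4$ identity gives a $3$-cycle on positions $1,2,3$, and the two arity-$5$ identities give the adjacent transpositions $(j,j+1)$ for $2\le j\le n-2$. A $3$-cycle together with the transposition $(2,3)$ generates the full symmetric group on positions $1,2,3$, and together with the remaining adjacent transpositions this yields all of $S_{n-1}$ acting on positions $1,\dots,n-1$, while the last letter is never moved. Hence every left-normed word equals one with $i_1\le\cdots\le i_{n-1}$, so the $n$ elements of $\mathfrak{A}_1^{(!)}(n)$ span.

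For linear independence I would again split by arity. For $n\le 4$ the exact dimensions (namely $2,4,5$ in arities $2,3,4$) come from the same finite computation, so a spanning set of matching cardinality is automatically a basis. For $n\ge 5$ I would invoke the inclusion $\mathcal{P}erm\subset\mathcal{L}\mathcal{S}_{\mathfrak{A}_1}^{\;\;(!)}$ recorded in the introduction: since $\mathcal{P}erm$ is a subvariety, the free $\mathcal{P}erm$-algebra is a quotient of the free $\mathcal{L}\mathcal{S}_{\mathfrak{A}_1}^{\;\;(!)}$-algebra, whence $\dim\mathcal{L}\mathcal{S}_{\mathfrak{A}_1}^{\;\;(!)}(n)\ge\dim\mathcal{P}erm(n)=n$. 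The spanning step gives the opposite inequality $\dim\mathcal{L}\mathcal{S}_{\mathfrak{A}_1}^{\;\;(!)}(n)\le n$, so the dimension equals $n$ and the $n$ spanning monomials form a basis. Alternatively, one could obtain independence uniformly by checking that the rewriting system above is confluent and appealing to the diamond lemma, or by computing the dimensions through Koszul duality with $\mathcal{L}\mathcal{S}_{\mathfrak{A}_1}$.

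The step I expect to be the main obstacle is the first stage of the spanning argument in arity $\ge 4$: proving that the relations genuinely eliminate every non-left-normed monomial above degree $3$, with an honest termination argument, rather than producing new independent right-nested survivors as happens in arity $3$. Once the reduction to left-normed words is secured, the sorting of the first $n-1$ letters is a clean consequence of Lemma \ref{lemmaA1} and elementary generation of $S_{n-1}$, and for $n\ge 5$ independence is then handed to us for free by the $\mathcal{P}erm$ lower bound.
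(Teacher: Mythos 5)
Your proposal is correct, and its spanning half follows essentially the same route as the paper: reduce to left-normed words via $a(bc)=\tfrac12((ab)c+(ba)c)$ (a consequence of left-alternativity and left-commutativity), then sort the first $n-1$ letters using Lemma \ref{lemmaA1}, with the low arities delegated to a computer check. Your explicit observation that the arity-$4$ identity yields the $3$-cycle on positions $1,2,3$ while the two arity-$5$ identities, after substituting a left-normed word into the first slot, yield the adjacent transpositions $(j,j+1)$ for $2\le j\le n-2$, and that these together generate $S_{n-1}$, is a welcome elaboration of the paper's one-line assertion that the generators ``can be ordered.'' Where you genuinely diverge is in the linear-independence step. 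The paper builds a concrete algebra $A_1\langle X\rangle$ on the proposed basis, defines the multiplication by sorting all letters except the rightmost one of the right factor, and verifies the left-alternative, right-alternative and left-commutative identities directly; this is self-contained but requires care at the boundary between the ad hoc low-degree multiplication and the general rule. You instead sandwich the dimension: the surjection $\mathcal{L}\mathcal{S}_{\mathfrak{A}_1}^{\;(!)}(n)\twoheadrightarrow\mathcal{P}erm(n)$ coming from the inclusion $\mathcal{P}erm\subset\mathcal{L}\mathcal{S}_{\mathfrak{A}_1}^{\;(!)}$ recorded in the introduction gives $\dim\ge n$, while spanning gives $\dim\le n$ for $n\ge 5$. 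This is a valid and arguably cleaner argument in high arity; its cost is that it leans on the a priori Koszul-duality inclusion and on $\dim\mathcal{P}erm(n)=n$, and it gives no explicit model of the free algebra, whereas the paper's construction produces one. Both approaches rely equally on machine computation for arities $\le 4$.
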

\begin{proof}
Firstly, we show that the set $\overline{\mathfrak{A}^{((!))}}_1$ spans $\mathcal{L}\mathcal{S}_{\mathfrak{A}_1}^{\;\;(!)}\<X\>$. For degrees up to $4$, the result can be verified by software programs, such as Albert \cite{Albert}. The left-alternative and left-commutative identities give that
\[
a(bc)=1/2((ba)c+(ab)c),
\]
i.e., every monomial in $\mathcal{L}\mathcal{S}_{\mathfrak{A}_1}^{\;\;(!)}\<X\>$ can be written as a linear combination of left-normed monomials. The identities from Lemma \ref{lemmaA1} provide an ordering of all generators except the external one in a left-normed monomial, i.e.,
\[
((\cdots((x_{i_1}x_{i_2})x_{i_3})\cdots )x_{i_{n-1}})x_{i_n}=((\cdots((x_{\sigma (i_1)}x_{\sigma (i_2)})x_{\sigma (i_3)})\cdots )x_{\sigma (i_{n-1})})x_{i_n},
\]
where $\sigma\in S_{n-1}$.

To prove the linear independence of the set $\overline{\mathfrak{A}^{((!))}}_1$, we construct the multiplication table for the algebra $A_1\<X\>$ with the basis $\overline{\mathfrak{A}^{((!))}}_1$ and show that such an algebra $A_1\<X\>$ belongs to the variety $\mathcal{L}\mathcal{S}_{\mathfrak{A}_1}^{\;\;(!)}\<X\>$. Up to degree 4, we define multiplication in $A_1\<X\>$ that is consistent with the defining identities of $\mathcal{L}\mathcal{S}_{\mathfrak{A}_1}^{\;\;(!)}\<X\>$. Starting from degree $5$, the multiplication is defined as follows:
\begin{multline}\label{mult1}
(((\cdots((x_{i_1}x_{i_2})x_{i_3})\cdots )x_{i_{n-1}})x_{i_n})(((\cdots((x_{j_1}x_{j_2})x_{j_3})\cdots )x_{j_{m-1}})x_{j_m})=\\
((\cdots((x_{k_1}x_{k_2})x_{k_3})\cdots )x_{k_{n+m-1}})x_{j_{m}},
\end{multline}
where $\{k_1,k_2,\ldots,k_{n+m-1}\}=\{i_1,i_2,\ldots,i_{n},j_1,j_2,\ldots,j_{m-1}\}$ and $k_1\leq k_2\leq\ldots\leq k_{n+m-1}$. It remains to check that the algebra $A_1\<X\>$ satisfies the left-alternative, right-alternative, and left-commutative identities.

Let
\[
a = ((\cdots((x_{i_1}x_{i_2})x_{i_3})\cdots)x_{i_{n-1}})x_{i_n},\quad
b = ((\cdots((y_{j_1}y_{j_2})y_{j_3})\cdots)y_{j_{m-1}})y_{j_m},
\]
\[
c = ((\cdots((z_{k_1}z_{k_2})z_{k_3})\cdots)z_{k_{t-1}})z_{k_t}
\]
be arbitrary left-normed monomials from the basis $\overline{\mathfrak{A}^{(!)}}_1$ of $A_1\<X\>$.  
We verify that the following identities hold in $A_1\<X\>$:
\[
(aa)b = a(ab),\qquad
(ab)b = a(bb),\qquad
a(bc) = a(bc).
\]
By (\ref{mult1}), both sides of the identity $(aa)b = a(ab)$ can be written as a left-normed monomial whose rightmost generator is $y_{j_m}$, while all other generators are ordered. Therefore, the two expressions coincide. For the same reason, we also obtain the equalities $(ab)b = a(bb)$ and $a(bc) = a(bc)$.

\end{proof}

For the operad $\mathcal{L}\mathcal{S}_{\mathfrak{A}_1}^{\;\;(!)}$, we have
\[
\dim(\mathcal{L}\mathcal{S}_{\mathfrak{A}_1}^{\;\;(!)}(1))=1,\;\;\dim(\mathcal{L}\mathcal{S}_{\mathfrak{A}_1}^{\;\;(!)}(2))=2,\;\;\dim(\mathcal{L}\mathcal{S}_{\mathfrak{A}_1}^{\;\;(!)}(3))=4,\;\;
\dim(\mathcal{L}\mathcal{S}_{\mathfrak{A}_1}^{\;\;(!)}(4))=5,
\]
and
\[
\dim(\mathcal{L}\mathcal{S}_{\mathfrak{A}_1}^{\;\;(!)}(n))=n,
\]
where $n\geq 5$.
In addition, using the computer algebra system Albert~\cite{Albert}, we obtain
\begin{center}
\begin{tabular}{c|cccccc}
 $n$ & 1 & 2 & 3 & 4 & 5 & 6 \\
 \hline $\dim(\mathcal{LS}_{\mathfrak{A}_1})(n)$ & 1 & 2 & 8 & 45 & 314 & 2499
\end{tabular}
\end{center}

As observed above, an algebra $\mathcal{LS}_{\mathfrak{A}_1}^{\;(!)}\langle X\rangle$ satisfies the identity
\[
a(bc) = \frac{1}{2}\bigl((ba)c + (ab)c\bigr).
\]
Hence, the variety $\mathcal{LS}_{\mathfrak{A}_1}^{\;(!)}$ can be represented as
\[
\mathcal{LS}_{\mathfrak{A}_1}^{\;(!)}
  = \mathcal{A}lt \;\cap\; \delta\textrm{-}\mathcal{Z}inb,
\]
that is, $\mathcal{LS}_{\mathfrak{A}_1}^{\;(!)}$ is the intersection of the varieties of alternative algebras and $\delta$-Zinbiel algebras, where $\delta = \frac{1}{2}$. For recent results on Zinbiel algebras, see \cite{Mash1,Mash2,KolMashSar}.

In diagram~(\ref{diagram}), in the case $\mathfrak{Z}_i = \mathfrak{A}_1$, the variety
$\mathcal{A}$ may be taken to be either the variety of alternative algebras or the variety
of $\tfrac{1}{2}$-Zinbiel algebras. Our computations show that, up to degree~$4$, the
classes of algebras $\mathcal{L}\mathcal{S}^{(-)}_{\mathfrak{Z}_i}$ and
$\mathcal{L}\mathcal{S}^{(+)}_{\mathfrak{Z}_i}$ coincide with the class of Lie and
commutative algebras, respectively.

\section{$\mathfrak{B}_1$-left-symmetric algebras}
In this section, we consider algebras from the variety $\mathcal{L}\mathcal{S}_{\mathfrak{B}_1}$.  Let us compute the dual operad for $\mathcal{L}\mathcal{S}_{\mathfrak{B}_1}$. The Lie-admissibility condition for $S\otimes U$ gives the defining identities of the operad $\mathcal{L}\mathcal{S}_{\mathfrak{B}_1}^{\;\;(!)}$, where $S$ is an algebra from $\mathcal{L}\mathcal{S}_{\mathfrak{B}_1}$. So, we have
\begin{multline*}
[[a\otimes u,b\otimes v],c\otimes w]=(ab)c\otimes (uv)w-(ba)c\otimes (vu)w-
c(ab)\otimes w(uv)+c(ba)\otimes w(vu)=\\
((ba)c+(ac)b-(ca)b-(bc)a+(cb)a)\otimes (uv)w-(ba)c\otimes (vu)w-
c(ab)\otimes w(uv)+c(ba)\otimes w(vu),
\end{multline*}
\begin{multline*}
[[b\otimes v,c\otimes w],a\otimes u]=(bc)a\otimes (vw)u-(cb)a\otimes (wv)u-
a(bc)\otimes u(vw)+a(cb)\otimes u(wv)=\\
(bc)a\otimes (vw)u-(cb)a\otimes (wv)u-
((ac)b-(ca)b-(bc)a+(cb)a+b(ac))\otimes u(vw)\\
+((ac)b-(ca)b+c(ab))\otimes u(wv)
\end{multline*}
and
\begin{multline*}
[[c\otimes w,a\otimes u],b\otimes v]=(ca)b\otimes (wu)v-(ac)b\otimes (uw)v-
b(ca)\otimes v(wu)+b(ac)\otimes v(uw)=\\
 (ca)b\otimes (wu)v-(ac)b\otimes (uw)v-
((bc)a-(cb)a+c(ba))\otimes v(wu)+b(ac)\otimes v(uw).
\end{multline*}
The sum of the same elements on the left side of the tensors, we obtain the following result: 
\begin{proposition}
The operad $\mathcal{L}\mathcal{S}_{\mathfrak{B}_1}^{\;\;(!)}$ is the assosymmetric operad with a left-commutative identity.
\end{proposition}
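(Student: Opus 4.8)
The plan is to impose the Lie-admissibility (Jacobi) condition on the commutator in $S\otimes U$ and to read off the defining identities of $U=\mathcal{L}\mathcal{S}_{\mathfrak{B}_1}^{\;\;(!)}$ from it, exactly as in the preceding proposition. Concretely, I would add the three displayed expansions of $[[a\otimes u,b\otimes v],c\otimes w]$, $[[b\otimes v,c\otimes w],a\otimes u]$ and $[[c\otimes w,a\otimes u],b\otimes v]$ and require the sum to vanish. In each expansion the left-hand tensor factor has already been reduced, by means of relation (\ref{as2}) and the rewriting rules for $a(bc)$, $a(cb)$, $b(ca)$, to the degree-$3$ multilinear basis of $\mathcal{L}\mathcal{S}\<a,b,c\>$, namely the eight words $(ba)c$, $(ac)b$, $(ca)b$, $(bc)a$, $(cb)a$, $c(ab)$, $c(ba)$, $b(ac)$. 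Since these eight words are linearly independent in $S$, the vanishing of the total sum forces the coefficient of each of them — a linear combination of words in $u,v,w$ — to vanish separately, and these eight $U$-relations are the sought defining identities.

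Carrying out the bookkeeping, I expect the words built from a single left multiplication, together with $(ba)c$, to produce left-commutative relations: the coefficient of $(ba)c$ gives $(uv)w=(vu)w$, while the coefficients of $c(ab)$, $c(ba)$ and $b(ac)$ give $u(wv)=w(uv)$, $w(vu)=v(wu)$ and $u(vw)=v(uw)$, all instances of left-commutativity. The remaining words $(ac)b$, $(ca)b$, $(bc)a$, $(cb)a$ should produce relations each equating two associators; for instance the coefficient of $(ac)b$ yields $(uv)w-u(vw)=(uw)v-u(wv)$, the symmetry of the associator under transposition of its last two arguments. Combined with left-commutativity (which gives $(uv)w=(vu)w$ and $u(vw)=v(uw)$, hence $(u,v,w)=(v,u,w)$) this produces the full assosymmetric identities, i.e. symmetry of the associator in all its arguments.

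Finally, I would verify that the eight relations so obtained are equivalent to the stated presentation by exhibiting the redundancies among them: adjacent pairs of the associator-type relations sum to further left-commutative relations (for example the coefficients of $(bc)a$ and $(cb)a$ add to $(vw)u=(wv)u$), confirming that no identity outside the assosymmetric and left-commutative families appears and that together they generate everything obtained. The main obstacle is purely the bookkeeping: one must track the signs coming both from the three bracket expansions and from the substitution (\ref{as2}), and correctly group the resulting monomials by their left tensor factor. Once the grouping is done, the identification of the surviving coefficients with the assosymmetric and left-commutative identities is immediate, giving that $\mathcal{L}\mathcal{S}_{\mathfrak{B}_1}^{\;\;(!)}$ is the assosymmetric operad with a left-commutative identity.
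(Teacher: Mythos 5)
Your proposal is correct and follows essentially the same route as the paper: expand the three double brackets in $S\otimes U$, reduce the left tensor factors to the eight-element multilinear basis of the free $\mathcal{L}\mathcal{S}_{\mathfrak{B}_1}$-algebra via the rewriting rules and the $\mathfrak{B}_1$ relation, and extract the defining identities of the dual operad as the vanishing coefficients, which yield exactly the left-commutative and assosymmetric identities. Your coefficient bookkeeping checks out against the paper's displayed expansions (e.g.\ $(ba)c$ gives $(uv)w=(vu)w$ and $(ac)b$ gives $(u,v,w)=(u,w,v)$), the only minor caveat being that $(uv)w=(vu)w$ is an instance of left-commutativity only modulo the assosymmetric identity.
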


Also, from direct computations, we obtain the following results:
\begin{theorem}
All identities of the algebra $\mathcal{L}\mathcal{S}_{\mathfrak{B}_1}^{\;(-)}\<X\>$ up to degree $4$ follow from the anti-commuta\-tivity and Jacobi identities.
\end{theorem}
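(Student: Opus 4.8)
The plan is to show that the only multilinear identities that can hold for the commutator in degree $\le 4$ are those of Lie algebras, by comparing dimensions with the free Lie operad. The key structural fact is that the commutator $[a,b]=ab-ba$ already satisfies anti-commutativity trivially, while the Jacobi identity
\[
[[a,b],c]+[[b,c],a]+[[c,a],b]=0
\]
holds in the \emph{entire} variety $\mathcal{LS}$: this is exactly relation (\ref{pol1}) of Proposition \ref{polarization}, expressing the classical Lie-admissibility of left-symmetric algebras. Since $\mathcal{L}\mathcal{S}_{\mathfrak{B}_1}\subset\mathcal{LS}$, both identities hold here as well. Consequently $\mathcal{L}\mathcal{S}_{\mathfrak{B}_1}^{\;(-)}\langle X\rangle$ is a Lie algebra generated by $X$, and for every $n$ there is a surjection of $S_n$-modules
\[
\pi_n:\ \mathrm{Lie}(n)\ \twoheadrightarrow\ \mathcal{L}\mathcal{S}_{\mathfrak{B}_1}^{\;(-)}(n),
\]
where $\mathcal{L}\mathcal{S}_{\mathfrak{B}_1}^{\;(-)}(n)$ is the multilinear degree-$n$ component of the commutator algebra. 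In particular $\dim\mathcal{L}\mathcal{S}_{\mathfrak{B}_1}^{\;(-)}(n)\le\dim\mathrm{Lie}(n)=(n-1)!$, so anti-commutativity and Jacobi give an automatic upper bound; the entire content of the theorem is that no extra identities appear, i.e. that $\pi_n$ is injective for $n\le 4$.

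The theorem is thus equivalent to the numerical equalities
\[
\dim\mathcal{L}\mathcal{S}_{\mathfrak{B}_1}^{\;(-)}(n)=(n-1)!\qquad(n=1,2,3,4),
\]
with right-hand sides $1,1,2,6$. The cases $n=1,2$ are immediate, since anti-commutativity already forces $\dim=1$ in degree $2$. For $n=3,4$ I would carry out an explicit dimension count on the free-algebra side. First I would fix a multilinear basis of $\mathcal{L}\mathcal{S}_{\mathfrak{B}_1}\langle x_1,\dots,x_n\rangle$ in degree $n$: using the degree-$3$ rewriting rules recorded in the introduction to reduce to left-normed form, together with the defining relations (\ref{as2}) of $\mathfrak{B}_1$, and then applying Gröbner-basis reduction as in Proposition \ref{polarization} to obtain a basis. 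Next I would expand each of the $(n-1)!$ left-normed brackets $[\,[\cdots[x_1,x_{\sigma(2)}],\dots],x_{\sigma(n)}\,]$, $\sigma\in S_{n-1}$, in this basis and compute the rank of the resulting matrix; that rank is precisely $\dim\mathcal{L}\mathcal{S}_{\mathfrak{B}_1}^{\;(-)}(n)$. Equivalently, and more cheaply for $n=4$, it suffices to exhibit a single algebra $A\in\mathcal{L}\mathcal{S}_{\mathfrak{B}_1}$ on which these $(n-1)!$ bracket monomials evaluate to linearly independent multilinear maps, since injectivity in one algebra forces injectivity of $\pi_n$.

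The main obstacle is the degree-$4$ computation. The free $\mathcal{L}\mathcal{S}_{\mathfrak{B}_1}$-algebra is already large in degree $4$ (for comparison, the table gives $\dim\mathcal{LS}_{\mathfrak{A}_1}(4)=45$), so the delicate point is verifying that the rank of the span of bracket monomials is exactly $6$ and not smaller; ruling out such a dimension drop is a finite but sizeable linear-algebra verification, which is where I would rely on the computer algebra system Albert~\cite{Albert}, as the authors do. Everything conceptual -- surjectivity of $\pi_n$, the bound $\dim\le(n-1)!$, and the reduction of the whole problem to two finite rank computations in degrees $3$ and $4$ -- is automatic from Lie-admissibility, so the proof reduces entirely to confirming that no collapse occurs in these two arities.
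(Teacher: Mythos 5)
Your proposal is correct and matches the paper's approach: the paper offers no argument beyond ``direct computations,'' and your reduction --- Jacobi holds automatically by Lie-admissibility (relation~(\ref{pol1})), so the theorem amounts to checking that the commutator monomials span a space of dimension $(n-1)!$ for $n\le 4$, verified by a finite rank computation in Albert --- is exactly the computation the authors are implicitly performing, with the conceptual scaffolding made explicit.
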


\begin{theorem}
All identities of the algebra $\mathcal{L}\mathcal{S}_{\mathfrak{B}_1}^{\;(+)}\<X\>$ up to degree $4$ follow from the commutativity.
\end{theorem}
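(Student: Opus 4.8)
The commutativity identity $\{a,b\}=\{b,a\}$ holds in $\mathcal{L}\mathcal{S}_{\mathfrak{B}_1}^{\;(+)}\langle X\rangle$ by the very definition of the anti-commutator $\{a,b\}=ab+ba$, so the content of the statement is that there are \emph{no} further independent identities in degrees $2,3,4$. The plan is to reformulate this operadically. Commutativity of $\{\cdot,\cdot\}$ yields a surjective morphism $\phi\colon \mathcal{C}om_{\mathrm{mag}}\twoheadrightarrow \mathcal{L}\mathcal{S}_{\mathfrak{B}_1}^{\;(+)}$ from the free commutative non-associative (magmatic) operad onto the operad generated by the anti-commutator, sending the generating commutative product to $\{\cdot,\cdot\}$. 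By construction, the kernel of $\phi$ is exactly the space of identities of $\mathcal{L}\mathcal{S}_{\mathfrak{B}_1}^{\;(+)}$ that do \emph{not} follow from commutativity. Hence the theorem is equivalent to the assertion that $\phi$ is an isomorphism in all arities $n\le 4$.

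Since the multilinear component $\mathcal{C}om_{\mathrm{mag}}(n)$ has the classical dimension $(2n-3)!!$, giving $1,1,3,15$ for $n=1,2,3,4$, and $\phi$ is surjective, it suffices to prove the matching lower bound $\dim\big(\mathcal{L}\mathcal{S}_{\mathfrak{B}_1}^{\;(+)}(n)\big)=(2n-3)!!$ for $n\le 4$. I would compute these dimensions by realizing the anti-commutator words inside the free algebra $\mathcal{L}\mathcal{S}_{\mathfrak{B}_1}\langle X\rangle$. First I would fix a multilinear basis of $\mathcal{L}\mathcal{S}_{\mathfrak{B}_1}\langle X\rangle$ in each degree up to $4$, using the rewriting rules $a(bc)=(ab)c-(ba)c+b(ac)$, $a(cb)=(ac)b-(ca)b+c(ab)$, $b(ca)=(bc)a-(cb)a+c(ba)$ together with the defining relation $\mathfrak{B}_1$ and its degree-$4$ consequences to reduce every monomial to a normal form.

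Next I would expand each of the $1,1,3,15$ multilinear commutative monomials (in degrees $1$ through $4$) as a linear combination of the chosen left-symmetric basis, assemble the coefficient vectors into a matrix, and compute its rank. If the rank equals $(2n-3)!!$ in every degree, then the anti-commutator words are linearly independent, $\phi$ is injective, and the theorem follows. For degrees $2$ and $3$ this is a short hand calculation; the substantive content is the degree-$4$ case, where both the left-symmetric basis and the number of commutative monomials are sizeable and the full rank $15$ must be confirmed. This final rank computation is the main obstacle, and it is the step I would delegate to the computer algebra system Albert~\cite{Albert}, consistently with the other direct computations used throughout the paper.
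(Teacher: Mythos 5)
Your proposal is correct and coincides with what the paper actually does: the paper offers no written argument beyond the phrase ``direct computations,'' and the computation you describe---expanding the $1,1,3,15$ multilinear anti-commutator monomials in a multilinear basis of $\mathcal{L}\mathcal{S}_{\mathfrak{B}_1}\langle X\rangle$ and verifying full rank, i.e.\ that the suboperad generated by $\{\cdot,\cdot\}$ has the dimensions $(2n-3)!!$ of the free commutative magmatic operad for $n\le 4$---is precisely the standard machine check the authors delegate to Albert. Your operadic reformulation via the surjection $\phi$ and the identification of its kernel with the space of extra identities is the right justification for why the rank computation settles the claim.
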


\begin{proposition}
The polarization of $\mathcal{L}\mathcal{S}_{\mathfrak{B}_1}\<X\>$ is defined by (\ref{pol1}), (\ref{pol2}) and
\[
\{[a,b],c\}+\{[b,c],a\}+\{[c,a],b\}=0.
\]
\end{proposition}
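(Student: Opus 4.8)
The plan is to follow the same strategy as in Proposition~\ref{polarization}. Since $\mathcal{L}\mathcal{S}_{\mathfrak{B}_1}$ is the variety $\mathcal{L}\mathcal{S}$ together with the single extra relation $\mathfrak{B}_1$ of (\ref{as2}), its polarization consists of the polarized relations of $\mathcal{L}\mathcal{S}$ --- which are exactly (\ref{pol1}) and (\ref{pol2}) by Proposition~\ref{polarization} --- together with the polarization of $\mathfrak{B}_1$. So the only thing to compute is the image of $\mathfrak{B}_1$ under the substitution $ab=\tfrac12([a,b]+\{a,b\})$, and then to reduce it modulo (\ref{pol1}) and (\ref{pol2}).

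First I would expand each left-normed degree-three term as
\[
(xy)z=\tfrac14\bigl([[x,y],z]+[\{x,y\},z]+\{[x,y],z\}+\{\{x,y\},z\}\bigr),
\]
and substitute this into the six terms of (\ref{as2}). Using the antisymmetry $[y,x]=-[x,y]$ and the symmetry $\{y,x\}=\{x,y\}$ of the two operations, the pairs $(ab)c-(ba)c$, $-(ac)b+(ca)b$ and $(bc)a-(cb)a$ each collapse so that the $[\{\cdot,\cdot\},\cdot]$ and $\{\{\cdot,\cdot\},\cdot\}$ contributions cancel, leaving
\[
[[a,b],c]-[[a,c],b]+[[b,c],a]+\{[a,b],c\}-\{[a,c],b\}+\{[b,c],a\}=0.
\]

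Next I would separate the two operation types. The commutator part $[[a,b],c]-[[a,c],b]+[[b,c],a]$ is, after rewriting $-[[a,c],b]=[[c,a],b]$, precisely the Jacobi combination of (\ref{pol1}), hence it reduces to zero and contributes nothing new. The surviving anti-commutator part, again using $-\{[a,c],b\}=\{[c,a],b\}$, is exactly
\[
\{[a,b],c\}+\{[b,c],a\}+\{[c,a],b\}=0,
\]
which is the asserted identity. Finally, as in Proposition~\ref{polarization}, I would assemble the rows coming from the polarized relations into a single matrix over the ordered monomial basis and bring it to echelon form; this confirms that (\ref{pol1}), (\ref{pol2}) and the displayed relation already generate all polarized relations and that no further independent identity appears.

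The routine but error-prone part is the sign bookkeeping in the first expansion: because $\{\cdot,\cdot\}$ is symmetric while $[\cdot,\cdot]$ is antisymmetric, one must track the sign of each of the resulting monomials carefully to see the cancellations. The one genuine conceptual point is recognizing that the entire commutator component of the polarized $\mathfrak{B}_1$ is absorbed by the Jacobi identity (\ref{pol1}), so that the new relation lives purely in the anti-commutator sector; the echelon-form step then only has to certify that nothing beyond this single relation survives.
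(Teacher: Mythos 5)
Your proof is correct and follows essentially the same method as the paper: substitute $ab=\tfrac12([a,b]+\{a,b\})$ into the defining relations and reduce linearly, exactly as in Proposition~\ref{polarization}. Your organization is slightly cleaner than the paper's wholesale echelon computation --- since the $S_3$-orbit of (\ref{as2}) is one-dimensional, polarizing only that relation and absorbing its commutator part into the Jacobi identity (\ref{pol1}) immediately yields $\{[a,b],c\}+\{[b,c],a\}+\{[c,a],b\}=0$ as the unique new identity.
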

\begin{proof}
The proof is analogous to Proposition \ref{polarization}.
\end{proof}

Let $\mathcal{L}\mathcal{S}_{\mathfrak{B}_1}^{\;\;(!)}\<X\>$ be a free algebra of the variety $\mathcal{L}\mathcal{S}_{\mathfrak{B}_1}^{\;\;(!)}$.
\begin{lemma}\label{lemmaB1}
In algebra $\mathcal{L}\mathcal{S}_{\mathfrak{B}_1}^{\;\;(!)}\<X\>$ the following identities hold:
\begin{equation}\label{id11B1}
((ab)c)d=((ac)b)d,    
\end{equation}
\begin{equation}\label{id12B1}
d(c(ab))=2d((ab)c)+((ac)d)b-2((ab)d)c,  
\end{equation}
\begin{equation}\label{id13B1}
d((ac)b)=d((ab)c)+((ac)d)b-((ab)d)c   
\end{equation}
and
\begin{equation}\label{id14B1}
c((ad)b)=d((ac)b).  
\end{equation}

\end{lemma}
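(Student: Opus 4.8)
The plan is to argue directly from the three generating identities of $\mathcal{L}\mathcal{S}_{\mathfrak{B}_1}^{\;\;(!)}$ furnished by the preceding proposition — left-symmetry $(xy)z-x(yz)=(yx)z-y(xz)$, right-symmetry $(xy)z-x(yz)=(xz)y-x(zy)$, and left-commutativity $x(yz)=y(xz)$ — all of which hold for \emph{every} element of the free algebra and hence may be applied with composite blocks substituted into the slots. First I would isolate two degree-$3$ consequences that carry the argument. Substituting the left-commutativity relation $y(xz)=x(yz)$ into left-symmetry and cancelling the common term yields the left-normed symmetry
\[
(xy)z=(yx)z \qquad\text{(S)},
\]
while right-symmetry, merely rearranged, gives
\[
(xy)z-(xz)y=x(yz)-x(zy)\qquad\text{(R)}.
\]
Both hold for arbitrary elements, which is the point that makes them usable in higher degree.

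Next I would prove \eqref{id11B1} outright, as it sets the template. Applying (R) to the triple $a,b,c$ and then (S) twice,
\[
((ab)c)d-((ac)b)d=\bigl((ab)c-(ac)b\bigr)d=\bigl(a(bc)-a(cb)\bigr)d=((bc)a)d-((cb)a)d=0,
\]
where the first equality is bilinearity, the second is (R), the third applies (S) to the blocks $a,(bc)$ and $a,(cb)$ (so that $(a(bc))d=((bc)a)d$ and similarly for $cb$), and the last uses (S) in the form $(bc)a=(cb)a$. This is fully rigorous since each step is an instance of (S) or (R).

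For \eqref{id12B1}--\eqref{id14B1} the strategy is to reduce each side to a common normal form. The systematic move is to use left-commutativity to turn every factor of shape $x(\cdots)$ with a right-nested argument into a product of two blocks, for instance $d((ac)b)=(ac)(db)$ and $c((ad)b)=(ad)(cb)$, and then to peel off left-normed terms by (R) in the form $P(zw)=(Pz)w-(Pw)z+P(wz)$. Besides left-normed monomials this produces ``two-block'' monomials $(xy)(zw)$; the key auxiliary relation is that such a block is symmetric in its first pair, $(xy)(zw)=(yx)(zw)$, which is just (S) with the composite element $zw$ in its third slot, while (R) controls the second pair up to left-normed correction terms. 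Feeding (S), (R) and this two-block symmetry into the reduced expressions for the two sides of each identity, I expect the left-normed parts and the residual two-block parts to match separately.

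The hard part is not any single clever step but the bookkeeping: degree $4$ carries many bracketings and orderings, and because the two-block monomials do not collapse further to left-normed form, one must track them carefully rather than normalising everything. For this reason I would in parallel confirm all four identities by the equivalent finite linear-algebra computation in the multilinear degree-$4$ component — forming the relation matrix generated by all substitution instances of the three defining identities and row-reducing it to echelon form, exactly as in the proof of Proposition~\ref{polarization} — which is precisely the verification carried out with Albert~\cite{Albert}.
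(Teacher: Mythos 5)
Your proposal is correct, and in its decisive step it coincides with the paper's proof: the paper disposes of all four identities with a one-line appeal to the computer algebra system Albert, i.e.\ to exactly the finite linear-algebra verification in the multilinear degree-$4$ component that you describe as your fallback. What you add beyond the paper is a valid hand derivation of the left-normed symmetry $(xy)z=(yx)z$ (which the paper itself only records later, as identity (\ref{id21B1}), by the same cancellation of left-symmetry against left-commutativity) and a complete, rigorous hand proof of \eqref{id11B1} via the chain $((ab)c)d-((ac)b)d=\bigl(a(bc)-a(cb)\bigr)d=((bc)a)d-((cb)a)d=0$; each step there is a legitimate substitution instance of (S) or (R) with composite arguments, so that identity is genuinely established without the machine. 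For \eqref{id12B1}--\eqref{id14B1}, however, your normal-form strategy (left-commutativity to produce two-block monomials such as $d((ac)b)=(ac)(db)$, then (R) to peel off left-normed corrections and (S) for the two-block symmetry) is only a sketch ending in ``I expect the parts to match''; since the bookkeeping is not carried out, the proof of those three identities in your write-up ultimately rests on the same computational check the paper invokes. That is acceptable --- it is the paper's own proof --- but you should be clear that only \eqref{id11B1} is proved by hand, while the remaining three are certified computationally.
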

\begin{proof}
It can be proved using computer algebra as software programs  Albert \cite{Albert}. 
\end{proof}

Let $\mathfrak{B}_1^{(!)}(i)$ be a set which is defined for each $i$ as follows:
\[
\mathfrak{B}_1^{(!)}(1)=\{x_{k_1}\},\;\mathfrak{B}_1^{(!)}(2)=\{x_{k_1}x_{k_2}\},
\]
and
\[
\mathfrak{B}_1^{(!)}(n)=\{((\cdots((x_{i_1}x_{i_2})x_{i_3})\cdots )x_{i_{n-1}})x_{i_n},\;x_{j_n}((\cdots((x_{j_1}x_{j_2})x_{j_3})\cdots )x_{j_{n-1}})\},
\]
where $i_1\leq i_2\leq\cdots \leq i_{n-1}$, $j_1\leq j_2\leq\cdots \leq j_{n-1}\leq j_n$ and $n\geq 3$. Also, we define 
\[
\overline{\mathfrak{B}^{(!)}}_1=\bigcup_{i\geq 1}\mathfrak{B}_1^{(!)}(i)
\]
\begin{theorem}
The set $\overline{\mathfrak{B}^{(!)}}_1$ is a basis of operad $\mathcal{L}\mathcal{S}_{\mathfrak{B}_1}^{\;\;(!)}$.
\end{theorem}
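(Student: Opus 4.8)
The plan is to follow the same two-step strategy as in the proof for $\mathcal{L}\mathcal{S}_{\mathfrak{A}_1}^{\;\;(!)}$: first show that $\overline{\mathfrak{B}^{(!)}}_1$ spans the free algebra $\mathcal{L}\mathcal{S}_{\mathfrak{B}_1}^{\;\;(!)}\<X\>$ (equivalently, the operad $\mathcal{L}\mathcal{S}_{\mathfrak{B}_1}^{\;\;(!)}$ in every arity), and then exhibit an explicit algebra whose product realizes all the defining relations as equalities, forcing the spanning set to be linearly independent. For small degrees both halves are checked directly with Albert \cite{Albert}, so the real work lies in producing a uniform reduction and a uniform multiplication in arbitrary degree.

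For spanning, I would first use assosymmetry to flatten nested products: since the associator is symmetric, together with the left-commutative identity $a(bc)=b(ac)$, every monomial can be rewritten as a linear combination of left-normed monomials and monomials of the shape $x\cdot w$ with $w$ left-normed. Identity (\ref{id12B1}), applied to subwords so as to obtain its higher-degree analogues, is precisely what collapses right-nesting of depth $\ge 2$ down to a single outer factor. Once only these two shapes remain, the ordering constraints are imposed locally. The degree-$3$ consequence $(ab)c=(ba)c$ of assosymmetry and left-commutativity, together with (\ref{id11B1}), sorts the first $n-1$ entries of a left-normed word, exactly as in the $\mathfrak{A}_1$ case. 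For a word $x_{j_n}\cdot w$, identity (\ref{id13B1}) sorts the inner left-normed factor $w$ (modulo left-normed correction terms, which are already in normal form), while (\ref{id14B1}) moves the largest generator into the outer position, yielding the constraint $j_1\le\cdots\le j_{n-1}\le j_n$. This shows that $\overline{\mathfrak{B}^{(!)}}_1$ spans.

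For linear independence, I would build a model algebra $B_1\<X\>$ on the vector space spanned by $\overline{\mathfrak{B}^{(!)}}_1$, defining a multiplication that in degrees up to $4$ agrees with the relations of $\mathcal{L}\mathcal{S}_{\mathfrak{B}_1}^{\;\;(!)}$ (as verified by Albert) and in higher degrees is given by explicit formulas for each of the four products $L\cdot L$, $L\cdot R$, $R\cdot L$, $R\cdot R$ of basis elements of left-normed type $L$ and right type $R$. As in the $\mathfrak{A}_1$ case, the output of such a product should depend only on the multiset of inner generators, the designated outer generator, and the resulting type. It then remains to verify that $B_1\<X\>$ satisfies the left-commutative identity and the two symmetries of the associator, $(a,b,c)=(b,a,c)$ and $(a,b,c)=(a,c,b)$. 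Since $\overline{\mathfrak{B}^{(!)}}_1$ is a genuine basis of $B_1\<X\>$ by construction and it spans the free algebra, the canonical surjection $\mathcal{L}\mathcal{S}_{\mathfrak{B}_1}^{\;\;(!)}\<X\>\twoheadrightarrow B_1\<X\>$ must carry a basis to a basis, which gives independence.

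The main obstacle is the construction and verification of this multiplication. Unlike the $\mathfrak{A}_1$ case, there are now two families of basis monomials, so a single product can output either type and the associator-symmetry identities mix them; in particular the left-normed correction terms appearing on the right-hand sides of (\ref{id12B1})--(\ref{id14B1}) must be reproduced consistently by the formulas for $L\cdot R$ and $R\cdot L$ in every degree. Getting these bookkeeping conventions to close up, so that all three defining identities hold simultaneously, is the delicate step; by contrast, the spanning reduction is routine once (\ref{id11B1})--(\ref{id14B1}) are lifted to subwords.
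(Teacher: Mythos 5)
Your spanning argument follows the paper's proof closely: reduce via assosymmetry and left-commutativity to the two shapes (left-normed words and words $x\cdot w$ with $w$ left-normed), then sort the generators using $(ab)c=(ba)c$ together with (\ref{id11B1})--(\ref{id14B1}). That half is fine. The gap is in the linear-independence half. You propose to transplant the $\mathfrak{A}_1$ strategy --- build a model algebra $B_1\langle X\rangle$ on the span of $\overline{\mathfrak{B}^{(!)}}_1$ and verify the defining identities directly --- but you never write down the multiplication, and you yourself flag that making the four products $L\cdot L$, $L\cdot R$, $R\cdot L$, $R\cdot R$ close up consistently is ``the delicate step.'' That step is exactly where the argument is, and it does not reduce to the bookkeeping of the $\mathfrak{A}_1$ case: there the product of two basis monomials was again a \emph{single} basis monomial (sort the multiset, keep the outer generator), so the identities held term-by-term. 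Here the reduction identities (\ref{id12B1})--(\ref{id14B1}) carry nontrivial left-normed correction terms with coefficients $\pm 1,\pm 2$, so the product of two basis elements is a genuine linear combination mixing the two families of normal forms, and verifying assosymmetry and left-commutativity on such combinations in all degrees is not something one can wave at. The paper explicitly remarks that constructing such a $B_1\langle X\rangle$ with a full multiplication table would be ``unreasonably cumbersome,'' and instead proves independence by a different route: it checks that all compositions (S-polynomials) of the rewriting rules coming from the defining identities together with (\ref{id11B1})--(\ref{id14B1}) and $(ab)c=(ba)c$ are trivial, i.e.\ the rewriting system is confluent, which by the diamond lemma / Gr\"obner bases for operads (verifiable with the software of \cite{DotsHij}) yields that the normal forms are linearly independent. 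As written, your proposal identifies the right target but leaves the decisive verification unperformed; either supply the explicit formulas for the four products and check the three identities on them, or switch to the confluence argument.
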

\begin{proof}
As before, we first show that the set $\overline{\mathfrak{B}^{(!)}}_1$ spans $\mathcal{L}\mathcal{S}_{\mathfrak{B}_1}^{\;\;(!)}$. For degrees up to $3$, the result is obvious. The left-symmetric and left-commutative identities give that
\begin{equation}\label{id21B1}
(ab)c=(ba)c.
\end{equation}
The left-symmetric and right-symmetric identities allow us to rewrite each monomial in $\mathcal{L}\mathcal{S}_{\mathfrak{B}_1}^{\;\;(!)}$ as a linear combination of monomials
\[
\mathcal{O}_{x_{i_1}}\cdots \mathcal{O}_{x_{i_{n-1}}}x_j,
\]
where $\mathcal{O}$ is an operator of left or right multiplication, i.e., $\mathcal{R}_xy=yx$ and $\mathcal{L}_xy=xy$. For monomial $\mathcal{O}_{x_{i_1}}\cdots \mathcal{O}_{x_{i_{n-1}}}x_j$, we consider $2$ cases:

Case 1: For $\mathcal{O}_{x_{i_1}}\ldots \mathcal{O}_{x_{i_{n-1}}}x_j$,  we consider the case $\mathcal{O}_{x_{i_1}}=\mathcal{R}_{x_{i_1}}$. By (\ref{id21B1}), we have
\[
\mathcal{R}_{x_{i_1}}\mathcal{O}_{x_{i_2}}x_j=(\mathcal{O}_{x_{i_2}}x_j)x_{i_1}=(\mathcal{L}_{x_{i_2}}x_j)x_{i_1}=(\mathcal{R}_{x_{i_2}}x_j)x_{i_1},
\]
and in general case,
\[
\mathcal{R}_{x_{i_1}}\mathcal{O}_{x_{i_2}}\cdots \mathcal{O}_{x_{i_{n-1}}}x_j=(\mathcal{R}_{x_{i_2}}\cdots \mathcal{R}_{x_{i_{n-1}}}x_j)x_{i_1}=((\cdots  (x_jx_{i_{n-1}})\cdots)x_{i_2})x_{i_1}.
\]
By (\ref{id11B1}) and (\ref{id21B1}), the generators $x_j,x_{i_{n-1}},\ldots,x_{i_2}$ can be ordered, i.e.,
\[
((\cdots((x_{i_1}x_{i_2})x_{i_3})\cdots )x_{i_{n-1}})x_{i_n}=((\cdots((x_{\sigma (i_1)}x_{\sigma (i_2)})x_{\sigma (i_3)})\cdots )x_{\sigma (i_{n-1})})x_{i_n},
\]
where $\sigma\in S_{n-1}$. The obtained monomials belong to the set $\overline{\mathfrak{B}^{(!)}}_1$. 

Case 2: For $\mathcal{O}_{x_{i_1}}\ldots \mathcal{O}_{x_{i_{n-1}}}x_j$, we consider the case $\mathcal{O}_{x_{i_1}}=\mathcal{L}_{x_{i_1}}$. By the previous case, we only need to consider monomials of the form
\begin{equation}\label{LL}
\mathcal{L}_{x_{i_1}}\cdots \mathcal{L}_{x_{i_t}} \mathcal{R}_{x_{i_{t+1}}}\cdots\mathcal{R}_{x_{i_{n-1}}}x_j=x_{i_{1}}(\cdots(x_{i_{t}}((\cdots(x_jx_{i_{n-1}})\cdots)x_{i_{t+1}}))\cdots).
\end{equation}

We denote by $\equiv$ the equality of two monomials modulo left-normed monomials, i.e., by (\ref{id12B1}), we have
\[
d(c(ab))\equiv 2d((ab)c).
\]
Now, we aim to show that for any $t$, the monomial of the form (\ref{LL}) can be written as a sum of monomials with $t=1$ and left-normed monomials.
If $t=2$, then
\begin{multline*}
\mathcal{L}_{x_{i_1}} \mathcal{L}_{x_{i_2}} \mathcal{R}_{x_{i_{t+1}}}\cdots\mathcal{R}_{x_{i_{n-1}}}x_j=x_{i_{1}}(x_{i_{2}}((\cdots(x_jx_{i_{n-1}})\cdots)x_{i_{t+1}}))\equiv^{(\ref{id12B1})}\\
2x_{i_{1}}(((\cdots(x_jx_{i_{n-1}})\cdots)x_{i_{t+1}})x_{i_{2}}).
\end{multline*}
We start the induction in $t$. For $t=k$, we have
\begin{multline*}
x_{i_{1}}(\cdots(x_{i_{t-1}}(x_{i_{t}}((\cdots(x_jx_{i_{n-1}})\cdots)x_{i_{t+1}})))\cdots)=^{(\ref{id12B1})}\\
2x_{i_{1}}(\cdots(x_{i_{t-1}}(((\cdots(x_jx_{i_{n-1}})\cdots)x_{i_{t+1}})x_{i_{t}}))\cdots)+
x_{i_{1}}(\cdots((((\cdots(x_jx_{i_{n-1}})\cdots)x_{i_{t-1}})x_{i_{t}})x_{i_{t+1}})\cdots)\\
-2x_{i_{1}}(\cdots((((\cdots(x_jx_{i_{n-1}})\cdots)x_{i_{t-1}})x_{i_{t+1}})x_{i_{t}})\cdots),
\end{multline*}
and for the obtained result, we use the inductive hypothesis.

By Case 1, we can order the generators $x_{j_1},x_{j_2},\ldots,x_{j_{n-2}}$ in
\[
x_{j_n}(((\cdots((x_{j_1}x_{j_2})x_{j_3})\cdots) x_{j_{n-2}})x_{j_{n-1}}).
\]
The identity (\ref{id13B1}) allows ordering $x_{j_{n-2}}$ and $x_{j_{n-1}}$ by module, i.e.,
\[
d((ac)b)\equiv d((ab)c),
\]
and the identity (\ref{id14B1}) orders $x_{j_{n}}$ with other generators.

To prove the linear independence of the set $\overline{\mathfrak{B}^{(!)}}_1$, we show that the compositions of rewriting rules defined the set of left-symmetric, right-symmetric, left-commutative identities and the identities (\ref{id11B1}), (\ref{id12B1}), (\ref{id13B1}), (\ref{id14B1}), (\ref{id21B1}) are trivial. The calculations are straightforward, or these long calculations can be done using the computer software \cite{DotsHij}.
\end{proof}
\begin{remark}
Indeed, the set $\overline{\mathfrak{B}^{(!)}}_1$ forms a basis of the algebra $\mathcal{L}\mathcal{S}_{\mathfrak{B}_1}^{\;\;(!)}\langle X\rangle$. Nevertheless, it would be unreasonably cumbersome to explicitly construct an algebra $B_1\langle X\rangle$ with a full multiplication table compatible with all defining identities of the variety $\mathcal{L}\mathcal{S}_{\mathfrak{B}_1}^{\;\;(!)}$.
\end{remark}

For the operad $\mathcal{L}\mathcal{S}_{\mathfrak{B}_1}^{\;\;(!)}$, we have
\[
\dim(\mathcal{L}\mathcal{S}_{\mathfrak{B}_1}^{\;\;(!)}(1))=1,\;\;\dim(\mathcal{L}\mathcal{S}_{\mathfrak{B}_1}^{\;\;(!)}(2))=2,,
\]
and
\[
\dim(\mathcal{L}\mathcal{S}_{\mathfrak{B}_1}^{\;\;(!)}(n))=n+1,
\]
where $n\geq 3$. In addition, using the computer algebra system Albert~\cite{Albert}, we obtain
\begin{center}
\begin{tabular}{c|cccccc}
 $n$ & 1 & 2 & 3 & 4 & 5 & 6 \\
 \hline $\dim(\mathcal{LS}_{\mathfrak{B}_1})(n)$ & 1 & 2 & 8 & 45 & 314 & 2533
\end{tabular}
\end{center}

In diagram~(\ref{diagram}), in the case $\mathfrak{Z}_i = \mathfrak{B}_1$, the variety
$\mathcal{A}$ may be taken to be either the variety of assosymmetric algebras or the variety
of left-commutative algebras, i.e., the variety $\mathcal{LS}_{\mathfrak{B}_1}^{\;(!)}$ is precisely the intersection of the varieties of assosymmetric and left-commutative algebras. Our computations show that, up to degree~$4$, the
classes of algebras $\mathcal{L}\mathcal{S}^{(-)}_{\mathfrak{Z}_i}$ and
$\mathcal{L}\mathcal{S}^{(+)}_{\mathfrak{Z}_i}$ coincide with the class of Lie and
commutative algebras, respectively.

\section{$\mathfrak{A}_2$-left-symmetric algebras}
In this section, we consider algebras from the variety $\mathcal{L}\mathcal{S}_{\mathfrak{A}_2}$.  Let us compute the dual operad for $\mathcal{L}\mathcal{S}_{\mathfrak{A}_2}$. The Lie-admissibility condition for $S\otimes U$ gives the defining identities of the operad $\mathcal{L}\mathcal{S}_{\mathfrak{A}_2}^{\;\;(!)}$, where $S$ is an algebra from $\mathcal{L}\mathcal{S}_{\mathfrak{A}_2}$. So, we have
\begin{multline*}
[[a\otimes u,b\otimes v],c\otimes w]=(ab)c\otimes (uv)w-(ba)c\otimes (vu)w-
c(ab)\otimes w(uv)+c(ba)\otimes w(vu)=\\
((ba)c-(ac)b+(ca)b-(bc)a+(cb)a-2b(ac)-2c(ab)-2c(ba))\otimes (uv)w\\
-(ba)c\otimes (vu)w-c(ab)\otimes w(uv)+c(ba)\otimes w(vu),
\end{multline*}
\begin{multline*}
[[b\otimes v,c\otimes w],a\otimes u]=(bc)a\otimes (vw)u-(cb)a\otimes (wv)u-
a(bc)\otimes u(vw)+a(cb)\otimes u(wv)=\\
(bc)a\otimes (vw)u-(cb)a\otimes (wv)u
-(-(ac)b+(ca)b-(bc)a+(cb)a-b(ac)-2c(ab)-2c(ba))\otimes u(vw)\\
+((ac)b-(ca)b+c(ab))\otimes u(wv)
\end{multline*}
and
\begin{multline*}
[[c\otimes w,a\otimes u],b\otimes v]=(ca)b\otimes (wu)v-(ac)b\otimes (uw)v-
b(ca)\otimes v(wu)+b(ac)\otimes v(uw)=\\
 (ca)b\otimes (wu)v-(ac)b\otimes (uw)v-
((bc)a-(cb)a+c(ba))\otimes v(wu)+b(ac)\otimes v(uw).
\end{multline*}
The sum of the same elements on the left side of the tensors, we obtain the following result: 
\begin{proposition}
The operad $\mathcal{L}\mathcal{S}_{\mathfrak{A}_2}^{\;\;(!)}$ is the alternative operad with an identity
\[
(ab)c=(ba)c
\]
\end{proposition}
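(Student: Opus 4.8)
The plan is to apply the same Lie-admissibility mechanism already used above for $\mathcal{LS}_{\mathfrak{A}_1}$ and $\mathcal{LS}_{\mathfrak{B}_1}$: by the principle invoked before the displayed computation, a space $U$ carries a $\mathcal{L}\mathcal{S}_{\mathfrak{A}_2}^{\;\;(!)}$-structure exactly when, for every $S\in\mathcal{LS}_{\mathfrak{A}_2}$, the commutator on $S\otimes U$ (with product $(a\otimes u)(b\otimes v)=ab\otimes uv$) satisfies the Jacobi identity. First I would take the three cyclic terms
\[
[[a\otimes u,b\otimes v],c\otimes w],\quad [[b\otimes v,c\otimes w],a\otimes u],\quad [[c\otimes w,a\otimes u],b\otimes v],
\]
expanded exactly as displayed, and reduce every left-hand ($S$-)component to the fixed degree-$3$ basis
\[
\{(ba)c,\,(ac)b,\,(ca)b,\,(bc)a,\,(cb)a,\,b(ac),\,c(ab),\,c(ba)\}
\]
of $\mathcal{LS}_{\mathfrak{A}_2}\langle a,b,c\rangle$, using the three left-symmetric rewriting rules together with the $\mathfrak{A}_2$ relation in the reduced form $(ab)c=(ba)c-(ac)b+(ca)b-(bc)a+(cb)a-2b(ac)-2c(ab)-2c(ba)$.

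Next I would sum the three terms and collect, for each of the eight basis monomials in $S$, the associated combination of degree-$3$ monomials in $U$. Since the eight $S$-monomials are linearly independent, the Jacobi identity holds for all $S$ precisely when each of these $U$-combinations vanishes, and this system of eight linear relations is the defining system of $\mathcal{L}\mathcal{S}_{\mathfrak{A}_2}^{\;\;(!)}$. The three decisive coefficients are: that of $(ba)c$, namely $(uv)w-(vu)w$, which yields the identity $(ab)c=(ba)c$; that of $b(ac)$, namely $-2(uv)w+u(vw)+v(uw)$, which in the presence of the previous identity is the linearized left-alternative law; and that of $(ac)b$, namely $(uv)w+(uw)v-u(vw)-u(wv)$, which is the linearized right-alternative law.

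Finally I would check that the remaining coefficients — those of $(ca)b$, $(bc)a$, $(cb)a$, $c(ab)$ and $c(ba)$ — are consequences of these three, so that the whole system is equivalent to left-alternativity, right-alternativity and $(ab)c=(ba)c$, with no additional constraint. Each is a short elimination: for example the $(bc)a$-coefficient $-(uv)w+(vw)u+u(vw)-v(wu)$, after applying the right-alternative law to rewrite $(vw)u$ and then the identity $(vu)w=(uv)w$, collapses back to the left-alternative relation. The main obstacle is purely organizational rather than conceptual: keeping the reduction to the chosen basis consistent across all three cyclic terms, and confirming that the collected system contains \emph{exactly} the two alternative laws and the left-commutative law and nothing stronger. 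As in Proposition~\ref{polarization}, the cleanest way to secure this last point is to record the eight coefficient vectors in the basis of degree-$3$ monomials in $U$ and bring the resulting matrix to echelon form.
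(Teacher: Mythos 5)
Your proposal is correct and follows essentially the same route as the paper: the displayed expansion of the three Jacobi terms over $S\otimes U$, reduction of the $S$-factors to the eight-element degree-$3$ basis via the left-symmetric rewriting rules and the reduced $\mathfrak{A}_2$ relation, and extraction of the defining identities of $\mathcal{L}\mathcal{S}_{\mathfrak{A}_2}^{\;(!)}$ as the vanishing of the $U$-coefficients. You merely make explicit what the paper compresses into the phrase ``the sum of the same elements on the left side of the tensors,'' including the useful check that the remaining five coefficient relations are consequences of left-alternativity, right-alternativity and $(ab)c=(ba)c$.
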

\begin{proposition}
The polarization of $\mathcal{L}\mathcal{S}_{\mathfrak{A}_2}\<X\>$ is defined by (\ref{pol1}), (\ref{pol2}) and
\[
\{a,\{b,c\}\}=\{[a,b],c\}+\{[a,c],b\}+[\{b,c\},a]+2/3[[a,c],b]+1/3[a,[b,c]].
\]
\end{proposition}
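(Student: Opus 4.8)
The plan is to follow the method of Proposition~\ref{polarization} verbatim, treating $\mathcal{L}\mathcal{S}_{\mathfrak{A}_2}$ as the variety $\mathcal{LS}$ augmented by the single additional defining relation $\mathfrak{A}_2$, the right-normed form of (\ref{as1}), namely
\[
a(bc)+b(ac)+a(cb)+c(ab)+b(ca)+c(ba)=0.
\]
Since the left-symmetric part of the structure is unchanged, polarizing it again reproduces exactly (\ref{pol1}) and (\ref{pol2}); these are therefore automatically part of the answer. Thus everything reduces to polarizing the one extra relation $\mathfrak{A}_2$ and reducing the outcome modulo (\ref{pol1}) and (\ref{pol2}).

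First I would substitute $xy=\tfrac12([x,y]+\{x,y\})$ into each of the six right-normed monomials of $\mathfrak{A}_2$. For a single term $x(yz)$ this produces
\[
x(yz)=\tfrac14\bigl([x,[y,z]]+[x,\{y,z\}]+\{x,[y,z]\}+\{x,\{y,z\}\}\bigr),
\]
so the polarized relation is a linear combination of right-nested degree-$3$ bracket monomials (four terms for each of the six permutations, twenty-four in all). Collecting like terms, and using that $\{\cdot,\cdot\}$ is symmetric and $[\cdot,\cdot]$ antisymmetric in their two slots, compresses this into a single relation $r$ in the degree-$3$ component of the free algebra on the operations $[\cdot,\cdot]$ and $\{\cdot,\cdot\}$.

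Next I would set up the linear-algebra reduction exactly as in Proposition~\ref{polarization}. Using the same monomial order
\[
[[\cdot,\cdot],\cdot]>[\{\cdot,\cdot\},\cdot]>\{[\cdot,\cdot],\cdot\}>\{\{\cdot,\cdot\},\cdot\},
\]
refined lexicographically and extended to the right-nested monomials, I would form the matrix whose rows encode $r$ together with the rows coming from (\ref{pol1}) and (\ref{pol2}), and bring it to reduced echelon form. Because the ground field has characteristic $0$ and all expressions are multilinear of degree $3$, this is a finite linear system over $\mathbb{Q}$ with a unique echelon form; the row that can be solved for $\{a,\{b,c\}\}$ then yields the stated identity.

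The main obstacle is purely the bookkeeping of this reduction. Unlike the pure $\mathcal{LS}$ computation, the right-normed relation $\mathfrak{A}_2$ introduces the right-nested monomials $\{\cdot,\{\cdot,\cdot\}\}$ and $[\cdot,\{\cdot,\cdot\}]$, which are not eliminated by (\ref{pol1}) and (\ref{pol2}) alone, so one must carry the full set of degree-$3$ bracket monomials and their rational coefficients through the echelon step. The appearance of the coefficients $\tfrac23$ and $\tfrac13$ is precisely the signature of this reduction against (\ref{pol1}) and (\ref{pol2}): it shows that the identity is not a direct rewrite of $\mathfrak{A}_2$, but its normal form modulo the left-symmetric polarization.
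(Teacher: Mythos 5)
Your proposal is correct and follows essentially the same route as the paper, whose proof of this proposition is simply the method of Proposition~\ref{polarization} applied to the enlarged set of defining relations: substitute $xy=\tfrac12([x,y]+\{x,y\})$, assemble the coefficient matrix of the resulting multilinear degree-$3$ relations (the three left-symmetric rows plus the one row coming from $\mathfrak{A}_2$), and row-reduce to echelon form. Your only deviation is cosmetic — reducing the single polarized $\mathfrak{A}_2$ row against the already-echelonized (\ref{pol1}) and (\ref{pol2}) rather than re-reducing the full system, and note that the ``right-nested'' monomials you worry about collapse onto the left-nested basis by the (anti)symmetry of the outer operation, so no enlargement of the column set is actually needed.
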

\begin{proof}
The proof is analogous to Proposition \ref{polarization}.
\end{proof}

Also, from direct computations, we obtain the following results:
\begin{theorem}
All identities of the algebra $\mathcal{L}\mathcal{S}_{\mathfrak{A}_2}^{\;(-)}\<X\>$ up to degree $4$ follow from the anti-commuta\-tivity and Jacobi identities.
\end{theorem}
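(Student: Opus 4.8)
The plan is to show that the operad governing the commutator, namely $\mathcal{L}\mathcal{S}_{\mathfrak{A}_2}^{\;(-)}$, coincides with the Lie operad in every arity up to $4$. One inclusion is automatic. By definition $[a,b]=ab-ba$ is anti-commutative, and the Jacobi identity for the commutator is precisely relation (\ref{pol1}) of Proposition \ref{polarization}, which holds throughout $\mathcal{L}\mathcal{S}$ and hence in the subvariety $\mathcal{L}\mathcal{S}_{\mathfrak{A}_2}$ (indeed, every left-symmetric algebra is Lie-admissible). Thus every consequence of the Lie identities is an identity of $\mathcal{L}\mathcal{S}_{\mathfrak{A}_2}^{\;(-)}\<X\>$, and what remains is the converse up to degree $4$: that no further identities arise.

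First I would reformulate the statement as a dimension equality. The multilinear component of the free Lie algebra of degree $n$ has dimension $(n-1)!$, that is $1,2,6$ for $n=2,3,4$. Since $\mathcal{L}\mathcal{S}_{\mathfrak{A}_2}^{\;(-)}$ is a quotient of the Lie operad, one always has $\dim \mathcal{L}\mathcal{S}_{\mathfrak{A}_2}^{\;(-)}(n)\le (n-1)!$, with equality exactly when no new identity appears in degree $n$. Hence it suffices to verify
\[
\dim \mathcal{L}\mathcal{S}_{\mathfrak{A}_2}^{\;(-)}(2)=1,\qquad \dim \mathcal{L}\mathcal{S}_{\mathfrak{A}_2}^{\;(-)}(3)=2,\qquad \dim \mathcal{L}\mathcal{S}_{\mathfrak{A}_2}^{\;(-)}(4)=6.
\]

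To compute these dimensions I would work inside the free algebra $\mathcal{L}\mathcal{S}_{\mathfrak{A}_2}\<x_1,\dots,x_n\>$ and determine, in each multilinear component, the image of the subalgebra generated under $[\cdot,\cdot]$. Concretely, using the rewriting rules of the introduction together with the defining relation $\mathfrak{A}_2$, one fixes an explicit multilinear basis of $\mathcal{L}\mathcal{S}_{\mathfrak{A}_2}$ in degrees $2,3,4$; then one expands a spanning family of left-normed commutators (for degree $4$, the brackets $[[[x_{i_1},x_{i_2}],x_{i_3}],x_{i_4}]$) in this basis and computes the rank of the resulting coefficient matrix. Equality of the rank with $(n-1)!$ shows that the canonical map $\Lie(n)\to \mathcal{L}\mathcal{S}_{\mathfrak{A}_2}^{\;(-)}(n)$ is injective, which is exactly the assertion of the theorem.

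Degrees $2$ and $3$ are immediate, since anti-commutativity and Jacobi already cut the multilinear spaces down to dimensions $1$ and $2$, leaving no room for a new relation. The main obstacle is degree $4$: the multilinear component of $\mathcal{L}\mathcal{S}_{\mathfrak{A}_2}$ is large, so expanding every commutator monomial into the chosen basis and row-reducing by hand is impractical. As in the analogous results for $\mathcal{L}\mathcal{S}_{\mathfrak{A}_1}$ and $\mathcal{L}\mathcal{S}_{\mathfrak{B}_1}$, this finite linear-algebra verification is best carried out with the computer algebra system Albert \cite{Albert}, which confirms that the rank equals $6$ and hence that all identities up to degree $4$ follow from anti-commutativity and the Jacobi identity.
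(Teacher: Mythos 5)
Your proposal is correct and follows essentially the same route as the paper, which establishes this theorem by ``direct computations'': one inclusion is automatic from Lie-admissibility of left-symmetric algebras, and the converse reduces to checking that the expansion map from the multilinear Lie components (of dimensions $1,2,6$) into $\mathcal{L}\mathcal{S}_{\mathfrak{A}_2}(n)$ has full rank for $n\le 4$, a finite linear-algebra verification carried out with computer algebra. Your reformulation as a rank computation is exactly the content of the paper's (unstated) verification, so there is nothing to add.
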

\begin{theorem}
All identities of the algebra $\mathcal{L}\mathcal{S}_{\mathfrak{A}_2}^{\;(+)}\<X\>$ up to degree $4$ follow from the commutative and the following identity:
\begin{multline*}
\{a, \{b, \{c, d\}\}\}+\{a,\{c,\{b, d\}\}\}+\{a, \{d,\{b, c\}\}\}+\{b,\{a, \{c, d\}\}\} \\
+\{c,\{a,\{b, d\}\}\}
+\{d,\{a, \{b, c\}\}\}+\{b,\{c,\{a, d\}\}\}+\{b,\{d,\{a, c\}\}\}\\
+\{c,\{b,\{a, d\}\}\}+\{d,\{b,\{a, c\}\}\}+\{c,\{d,\{a, b\}\}\}+\{d,\{c,\{a, b\}\}\}\\
-\{\{a, d\}, \{b, c\}\}-\{\{a, c\}, \{b, d\}\}-\{\{a, b\}, \{c, d\}\}=0.
\end{multline*}
\end{theorem}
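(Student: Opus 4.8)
The plan is to read the statement as a computation of the kernel of the polarization (symmetrization) map in each multilinear degree, organized as an identity of $S_n$-modules. Write $\{a,b\}=ab+ba$ for the anti-commutator and let $\mathcal{C}\<X\>$ be the free commutative nonassociative magma algebra on $X$ in the single symmetric operation $\{\cdot,\cdot\}$. The assignment $\{a,b\}\mapsto ab+ba$ extends to an algebra homomorphism
\[
\phi\colon \mathcal{C}\<X\>\longrightarrow \mathcal{L}\mathcal{S}_{\mathfrak{A}_2}\<X\>,
\]
and by construction the identities satisfied by $\{\cdot,\cdot\}$ in $\mathcal{L}\mathcal{S}_{\mathfrak{A}_2}^{\;(+)}\<X\>$ are exactly the multilinear components of $\ker\phi$. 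Since commutativity is already imposed in $\mathcal{C}\<X\>$, proving the theorem amounts to showing that, in degrees $\le 4$, $\ker\phi$ is generated as a $T$-ideal by the single displayed degree-$4$ relation. Because there are no identities in degrees $2$ or $3$ to take consequences of, and because substituting into a degree-$4$ identity raises the degree, the degree-$4$ part of that $T$-ideal is simply the linear span of the $S_4$-orbit of the identity. So the concrete goal is: $\ker\phi=0$ in degrees $2$ and $3$, and in degree $4$ it equals the $S_4$-submodule generated by the stated relation.

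First I would fix the multilinear basis of $\mathcal{L}\mathcal{S}_{\mathfrak{A}_2}\<X\>$ in degrees $2,3,4$ coming from the rewriting rules for left-symmetry together with the $\mathfrak{A}_2$ relation (the same reductions used in the preceding sections), and record $\dim\mathcal{L}\mathcal{S}_{\mathfrak{A}_2}(n)$ for $n\le 4$. In degree $2$ the image of $\{a,b\}$ is one-dimensional, so $\ker\phi$ is spanned by $\{a,b\}-\{b,a\}$, i.e.\ commutativity. In degree $3$ the space $\mathcal{C}\<X\>$ has the three multilinear monomials $\{\{a,b\},c\}$, $\{\{a,c\},b\}$, $\{\{b,c\},a\}$; I would expand each under $\phi$ in the chosen basis and check that the resulting $3\times\dim\mathcal{L}\mathcal{S}_{\mathfrak{A}_2}(3)$ matrix has rank $3$, so that $\phi$ is injective in degree $3$ and no new identity occurs.

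In degree $4$ the free commutative magma has the $15=(2\cdot4-3)!!$ multilinear monomials, split into the comb-type shape $\{\{\{a,b\},c\},d\}$ ($12$ labelings) and the balanced shape $\{\{a,b\},\{c,d\}\}$ ($3$ labelings). I would form the $15\times\dim\mathcal{L}\mathcal{S}_{\mathfrak{A}_2}(4)$ matrix of their $\phi$-images relative to the degree-$4$ basis, compute its rank over $\mathbb{Q}$, and thereby obtain $\dim\ker\phi$ in degree $4$. It then remains to verify two things: that the displayed identity indeed lies in $\ker\phi$ (a direct substitution-and-reduction check), and that the $S_4$-submodule it generates exhausts $\ker\phi$. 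I would establish the generation either by decomposing $\ker\phi$ into isotypic $S_4$-components and checking that the projection of the given identity onto each irreducible constituent is nonzero, or, equivalently and more cheaply, by computing the dimension of the linear span of the $S_4$-orbit of the identity and confirming that it equals $\dim\ker\phi$.

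The main obstacle is this final degree-$4$ step, and it has two delicate sub-points. First, one needs a genuinely reduced basis of $\mathcal{L}\mathcal{S}_{\mathfrak{A}_2}(4)$: the rewriting system from left-symmetry and $\mathfrak{A}_2$ must be confluent in degree $4$ so that the $\phi$-images are expressed unambiguously, otherwise the rank computation is meaningless. Second, one must confirm that a \emph{single} relation generates the whole kernel as an $S_4$-module rather than a proper submodule, since $\ker\phi$ in degree $4$ may split into several irreducible representations and the orbit of the given identity must meet each of them. Both points are most safely handled by the representation-theoretic linear-algebra computation over $\mathbb{Q}$ indicated above, which is the content of the \textit{direct computations} underlying the statement.
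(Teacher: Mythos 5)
Your proposal is correct and is essentially the computation the paper itself invokes: the theorem is justified there only by the phrase ``from direct computations,'' and your plan --- expanding the multilinear anti-commutator monomials in a reduced basis of $\mathcal{L}\mathcal{S}_{\mathfrak{A}_2}(n)$ for $n\le 4$, computing the kernel rank over $\mathbb{Q}$, and checking that the $S_4$-span of the displayed relation (which, being fully symmetrized, is a one-dimensional trivial $S_4$-module) exhausts the degree-$4$ kernel --- is exactly the linear-algebra verification underlying that claim.
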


Let $\mathcal{L}\mathcal{S}_{\mathfrak{A}_2}^{\;\;(!)}\<X\>$ be a free algebra of the variety $\mathcal{L}\mathcal{S}_{\mathfrak{A}_2}^{\;\;(!)}$.
\begin{lemma}\label{lemmaA2}
In algebra $\mathcal{L}\mathcal{S}_{\mathfrak{A}_2}^{\;\;(!)}\<X\>$ the following identities hold:
\begin{equation}\label{id11A2}
((ab)c)d=((ac)b)d,    
\end{equation}
\begin{equation}\label{id12A2}
d((ab)c)=((ab)d)c,    
\end{equation}
and
\begin{equation}\label{id13A2}
d(c(ab))=((ac)d)b,    
\end{equation}
\end{lemma}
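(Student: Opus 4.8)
The plan is to derive all three identities from the defining relations of $\mathcal{L}\mathcal{S}_{\mathfrak{A}_2}^{\;\;(!)}$, namely the left- and right-alternative laws together with $(ab)c=(ba)c$. Writing the associator as $(x,y,z)=(xy)z-x(yz)$, linearizing the two alternative laws gives $(x,y,z)=-(y,x,z)$ and $(x,y,z)=-(x,z,y)$, so that $(x,y,z)$ is totally skew-symmetric. Combining the first of these with $(ab)c=(ba)c$ yields the key consequence
\begin{equation}\label{starA2}
(ab)c=\tfrac12\bigl(a(bc)+b(ac)\bigr),
\end{equation}
equivalently $a(bc)+b(ac)=2(ab)c$, which lets me replace any right-associated product occurring inside a monomial by a combination of left-multiplied ones. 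I would use \eqref{starA2}, the skew-symmetric associator, and the linearized right-alternative law $(xy)z+(xz)y=x(yz+zy)$ as the basic rewriting tools.

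Next I would orient these relations into a rewriting system that pushes every degree-$4$ monomial toward a left-normed normal form $((x_1x_2)x_3)x_4$ with the inner factors sorted. Identity \eqref{id11A2}, $((ab)c)d=((ac)b)d$, is the assertion that the two inner factors may be transposed, and I expect it to follow by expanding $\bigl((ab)c-(ac)b\bigr)d$ with the right-alternative law and the skew-symmetry of the associator, then collecting terms using $(ab)c=(ba)c$. For \eqref{id12A2}, $d((ab)c)=((ab)d)c$, and \eqref{id13A2}, $d(c(ab))=((ac)d)b$, the idea is to unfold the right-associated inner products by \eqref{starA2}, reassemble the result into left-normed form with the right-alternative law, and check that the outer factor $d$ lands in the prescribed position; here the relation $L_{ab}=L_{ba}$ encoded by $(ab)c=(ba)c$ is what supplies the required transpositions among the resulting letters.

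The step I expect to be the main obstacle is the degree-$4$ bookkeeping: unfolding \eqref{starA2} and the alternative laws produces terms in which a product sits in a non-leftmost slot, such as $(ab)(cd)$ and associators of the form $(ab,c,d)$ or $(a,bc,d)$, and showing that all of these cancel requires genuine use of the \emph{right}-alternative law, not merely the skew associator and \eqref{starA2}. Concretely, I would verify that the rewriting system built from the left-alternative, right-alternative, and $(ab)c=(ba)c$ relations together with \eqref{starA2} has only trivial compositions, so that normal forms are unique; the three identities then follow because both sides of each reduce to the same left-normed normal form. This confluence check is precisely the routine but lengthy computation that is most safely discharged with the computer algebra system Albert \cite{Albert}.
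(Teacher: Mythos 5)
Your proposal is, in substance, the same as the paper's proof: the paper disposes of Lemma~\ref{lemmaA2} with a one-line appeal to the computer algebra system Albert, and your argument, after some correct preliminary observations (the skew-symmetric associator and the consequence $(ab)c=\tfrac12\bigl(a(bc)+b(ac)\bigr)$ of combining the linearized left-alternative law with $(ab)c=(ba)c$), likewise delegates the decisive degree-$4$ verification to Albert. The only caveat is that confluence of the rewriting system is not needed for (and should not be invoked to justify) the lemma itself --- it belongs to the linear-independence half of the subsequent basis theorem; for the lemma one only needs to check membership of each difference in the degree-$4$ multilinear span of consequences of the defining identities, which is exactly what the software does.
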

\begin{proof}
It can be proved using computer algebra as software programs  Albert \cite{Albert}. 
\end{proof}

Let $\mathfrak{A}_2^{(!)}(i)$ be a set which is defined for each $i$ as follows:
\[
\mathfrak{A}_2^{(!)}(1)=\{x_{k_1}\},\;\mathfrak{A}_2^{(!)}(2)=\{x_{k_1}x_{k_2}\},
\]
\[
\mathfrak{A}_2^{(!)}(3)=\{(x_{l_1}x_{l_2})x_{l_3}, (x_{l_1}x_{l_3})x_{l_2}, x_{l_3}(x_{l_1}x_{l_2}), x_{l_3}(x_{l_2}x_{l_1})\}
\]
and
\[
\mathfrak{A}_2^{(!)}(n)=\{((\cdots((x_{i_1}x_{i_2})x_{i_3})\cdots )x_{i_{n-1}})x_{i_n},
\]
where $l_1\leq l_2\leq l_3$, $i_1\leq i_2\leq\cdots \leq i_{n-1}$ and $n\geq 4$. Also, we define 
\[
\overline{\mathfrak{A}^{(!)}}_2=\bigcup_{i\geq 1}\mathfrak{A}_2^{(!)}(i)
\]
\begin{theorem}
The set $\overline{\mathfrak{A}^{(!)}}_2$ is a basis of algebra $\mathcal{L}\mathcal{S}_{\mathfrak{A}_2}^{\;\;(!)}\<X\>$.
\end{theorem}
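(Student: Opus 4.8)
The plan is to follow the same two-step strategy used for $\mathcal{L}\mathcal{S}_{\mathfrak{A}_1}^{\;\;(!)}\<X\>$: first show that $\overline{\mathfrak{A}^{(!)}}_2$ spans $\mathcal{L}\mathcal{S}_{\mathfrak{A}_2}^{\;\;(!)}\<X\>$, and then establish linear independence by realizing the spanning set as an honest basis of a concretely built algebra $A_2\<X\>$ lying in the variety $\mathcal{L}\mathcal{S}_{\mathfrak{A}_2}^{\;\;(!)}$.

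For the spanning part, degrees up to $4$ can be checked directly with Albert \cite{Albert}; this also accounts for the exceptional layer $\mathfrak{A}_2^{(!)}(3)$, which still contains the two right-nested monomials $x_{l_3}(x_{l_1}x_{l_2})$ and $x_{l_3}(x_{l_2}x_{l_1})$ because at degree $3$ there is no fourth letter available to straighten them. For $n\ge 4$ I would argue in two stages. First, reduce an arbitrary monomial to left-normed form. The left-alternative identity together with $(ab)c=(ba)c$ collapses the degree-$3$ right-nested shapes as soon as a fourth letter is present, and this is exactly what identities (\ref{id12A2}) and (\ref{id13A2}) of Lemma \ref{lemmaA2} record: $d((ab)c)=((ab)d)c$ and $d(c(ab))=((ac)d)b$ turn the product of a single generator with a degree-$3$ monomial into a left-normed monomial. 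Proceeding by induction on degree, any monomial $w=uv$ with $\deg w\ge 4$ has both factors reducible to basis monomials, and each elementary right-multiplication step is straightened by (\ref{id12A2}), (\ref{id13A2}) and the alternative laws; hence $w$ is a combination of left-normed monomials. Second, sort the first $n-1$ letters: identity (\ref{id11A2}), $((ab)c)d=((ac)b)d$, transposes the inner letters, and $(ab)c=(ba)c$ transposes the first two, so together they realize any $\sigma\in S_{n-1}$ acting on the initial segment, while the external letter $x_{i_n}$ is left untouched. This produces exactly the monomials of $\mathfrak{A}_2^{(!)}(n)$.

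For linear independence I would build $A_2\<X\>$ as the vector space spanned by $\overline{\mathfrak{A}^{(!)}}_2$, with multiplication defined to agree with the free algebra (equivalently, with the defining identities, as read off from Albert) in total degree up to $4$, and, for products of total degree $\ge 5$, by a uniform rule analogous to (\ref{mult1}): the result is the left-normed monomial whose external (rightmost) letter is inherited from the second factor and whose remaining letters are the sorted union of all the others. One then verifies that $A_2\<X\>$ satisfies the left-alternative, right-alternative, and $(ab)c=(ba)c$ identities, so that $A_2\<X\>\in\mathcal{L}\mathcal{S}_{\mathfrak{A}_2}^{\;\;(!)}$. Since the canonical surjection $\mathcal{L}\mathcal{S}_{\mathfrak{A}_2}^{\;\;(!)}\<X\>\to A_2\<X\>$ fixing $X$ sends each basis monomial to the corresponding basis vector of $A_2\<X\>$, the spanning set maps onto a linearly independent family and must therefore already be independent; the two steps together give the theorem.

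The main obstacle is the compatibility check in the linear-independence step across the degree boundary. Because $\mathfrak{A}_2^{(!)}(3)$ is not purely left-normed while $\mathfrak{A}_2^{(!)}(n)$ for $n\ge 4$ is, the uniform rule must interact correctly with the explicit degree-$3$ table whenever a right-nested degree-$3$ factor participates in a product of total degree $\ge 5$, and the defining identities genuinely mix these degrees. The delicate cases are precisely the triples of basis monomials whose total degree equals $4$, where (\ref{id12A2}) and (\ref{id13A2}) are first invoked and where a misplaced letter or coefficient would break either right-alternativity or $(ab)c=(ba)c$. Verifying that the uniform rule is compatible enough to satisfy all three identities on such triples — equivalently, that the straightening relations of Lemma \ref{lemmaA2} are confluent — is where the real work lies; as in the $\mathfrak{B}_1$ case, one may instead discharge it by checking the triviality of all compositions of the rewriting rules, for instance with \cite{DotsHij}.
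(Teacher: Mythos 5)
Your proposal is correct and follows essentially the same two-step strategy as the paper: spanning via the straightening identities of Lemma \ref{lemmaA2} together with $(ab)c=(ba)c$, and linear independence by constructing the algebra $A_2\<X\>$ with the uniform left-normed multiplication rule and checking the defining identities. The only differences are cosmetic (you place the computer-verified cutoff at degree $4$ rather than the paper's degree $3$, and you make explicit the confluence concern at the degree boundary, which the paper passes over silently).
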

\begin{proof}
Firstly, we show that the set $\overline{\mathfrak{A}^{(!)}}_2$ spans $\mathcal{L}\mathcal{S}_{\mathfrak{A}_2}^{\;\;(!)}\<X\>$. For degrees up to $3$, the result can be verified by software programs, such as Albert \cite{Albert}. Starting from degree $4$, the inductive hypothesis and identities from Lemma \ref{lemmaA2} give the result that any monomial in $\mathcal{L}\mathcal{S}_{\mathfrak{A}_2}^{\;\;(!)}\<X\>$ can be written as a sum of left-normed monomials, i.e.,
\begin{multline*}
(((\cdots((x_{i_1}x_{i_2})x_{i_3})\cdots )x_{i_{n-1}})x_{i_n})(((\cdots((x_{j_1}x_{j_2})x_{j_3})\cdots )x_{j_{m-1}})x_{j_m})=^{(\ref{id12A2})}\\
(((\cdots((x_{j_1}x_{j_2})x_{j_3})\cdots )x_{j_{m-1}})(((\cdots((x_{i_1}x_{i_2})x_{i_3})\cdots )x_{i_{n-1}})x_{i_n}))x_{j_m},
\end{multline*}
and remain to use the inductive hypothesis.

Also, the identities from Lemma \ref{lemmaA2} and $(ab)c=(ba)c$ provide an ordering of all generators except the external one in a left-normed monomial, i.e.,
\[
((\cdots((x_{i_1}x_{i_2})x_{i_3})\cdots )x_{i_{n-1}})x_{i_n}=((\cdots((x_{\sigma (i_1)}x_{\sigma (i_2)})x_{\sigma (i_3)})\cdots )x_{\sigma (i_{n-1})})x_{i_n},
\]
where $\sigma\in S_{n-1}$.

To prove the linear independence of the set $\overline{\mathfrak{A}^{(!)}}_2$, we construct the multiplication table for the algebra $A_2\<X\>$ with the basis $\overline{\mathfrak{A}^{(!)}}_2$ and show that such an algebra $A_2\<X\>$ belongs to the variety $\mathcal{L}\mathcal{S}_{\mathfrak{A}_2}^{\;\;(!)}$. Up to degree $3$, we define multiplication in $A_2\<X\>$ that is consistent with the defining identities of $\mathcal{L}\mathcal{S}_{\mathfrak{A}_2}^{\;\;(!)}\<X\>$. Starting from degree $4$, the multiplication is defined as follows:
\begin{multline*}
(((\cdots((x_{i_1}x_{i_2})x_{i_3})\cdots )x_{i_{n-1}})x_{i_n})(((\cdots((x_{j_1}x_{j_2})x_{j_3})\cdots )x_{j_{m-1}})x_{j_m})=\\
((\cdots((x_{k_1}x_{k_2})x_{k_3})\cdots )x_{k_{n+m-1}})x_{k_{j_m}},
\end{multline*}
where $\{k_1,k_2,\ldots,k_{n+m-1}\}=\{i_1,i_2,\ldots,i_{n},j_1,j_2,\ldots,j_{m-1}\}$ and $k_1\leq k_2\leq\ldots\leq k_{n+m-1}$. It remains to check that the algebra $A_2\<X\>$ satisfies the defining identities of the variety $\mathcal{L}\mathcal{S}_{\mathfrak{A}_2}^{\;\;(!)}$.

Let
\[
a = ((\cdots((x_{i_1}x_{i_2})x_{i_3})\cdots)x_{i_{n-1}})x_{i_n},\quad
b = ((\cdots((y_{j_1}y_{j_2})y_{j_3})\cdots)y_{j_{m-1}})y_{j_m},
\]
\[
c = ((\cdots((z_{k_1}z_{k_2})z_{k_3})\cdots)z_{k_{t-1}})z_{k_t}
\]
be arbitrary left-normed monomials from the basis $\overline{\mathfrak{A}^{(!)}}_2$ of $A_2\<X\>$.  
We verify that the following identities hold in $A_2\<X\>$:
\[
(aa)b = a(ab),\qquad
(ab)b = a(bb),\qquad
(ab)c=(ba)c.
\]
By (\ref{mult1}), both sides of the identity $(aa)b = a(ab)$ can be written as a left-normed monomial whose rightmost generator is $y_{j_m}$, while all other generators are ordered. Therefore, the two expressions coincide. For the same reason, we also obtain the equalities $(ab)b = a(bb)$ and $(ab)c=(ba)c$.
\end{proof}

For the operad $\mathcal{L}\mathcal{S}_{\mathfrak{A}_2}^{\;\;(!)}$, we have
\[
\dim(\mathcal{L}\mathcal{S}_{\mathfrak{A}_2}^{\;\;(!)}(1))=1,\;\;\dim(\mathcal{L}\mathcal{S}_{\mathfrak{A}_2}^{\;\;(!)}(2))=2,\;\;\dim(\mathcal{L}\mathcal{S}_{\mathfrak{A}_2}^{\;\;(!)}(3))=4,
\]
and
\[
\dim(\mathcal{L}\mathcal{S}_{\mathfrak{A}_2}^{\;\;(!)}(n))=n,
\]
where $n\geq 4$. In addition, using the computer algebra system Albert~\cite{Albert}, we obtain
\begin{center}
\begin{tabular}{c|cccccc}
 $n$ & 1 & 2 & 3 & 4 & 5 & 6 \\
 \hline $\dim(\mathcal{LS}_{\mathfrak{A}_2})(n)$ & 1 & 2 &  8 & 44 & 285 & 1959
\end{tabular}
\end{center}

In diagram~(\ref{diagram}), in the case $\mathfrak{Z}_i = \mathfrak{A}_2$, the variety
$\mathcal{A}$ may be taken to be either the variety of alternative algebras or the variety
of algebras whose left-center is the commutator, i.e., the variety $\mathcal{LS}_{\mathfrak{A}_2}^{\;(!)}$ is precisely the intersection of the varieties of alternative algebras and the variety
of algebras whose left-center is the commutator. Analogical algebras with a left-center were considered in \cite{DzhumaLeib+, SarAbd}. Our computations show that, up to degree~$4$, the
classes of algebras $\mathcal{L}\mathcal{S}^{(-)}_{\mathfrak{Z}_i}$ and
$\mathcal{L}\mathcal{S}^{(+)}_{\mathfrak{Z}_i}$ coincide with the class of Lie and
commutative algebras with the additional identity of the form
\begin{equation}\label{Jord}
\sum_{i_3<i_4} \{x_{i_1},\{x_{i_2},\{x_{i_3},x_{i_4}\}\}\}-\sum_{i_1<i_2,i_3<i_4,i_2<i_4}\{\{x_{i_1},x_{i_2}\},\{x_{i_3},x_{i_4}\}\}=0,
\end{equation}
respectively.

From the table given below, we see that the algebra $\mathcal{L}\mathcal{S}_{\mathfrak{A}_2}\langle X\rangle$ is not isomorphic to either of the other two algebras. From the dimensions of $\mathcal{L}\mathcal{S}_{\mathfrak{A}_1}\langle X\rangle$ and $\mathcal{L}\mathcal{S}_{\mathfrak{B}_1}\langle X\rangle$ of degree $6$, it follows that these algebras are not isomorphic.

Let us summarise some of the obtained results in the following table:

\begin{table}[h!]
\centering
\begin{tabular}{|c|p{3cm}|p{3cm}|p{3cm}|p{4cm}|}
\hline
\textbf{Type} 
& \textbf{Dual operad $\mathcal{L}\mathcal{S}_{\mathfrak{Z}_i}^{\;(!)}$} 
& \textbf{Dimensions of operad up to degree $5$} 
& \textbf{Commutator (up to degree $4$)} 
& \textbf{Anti-commutator (up to degree $4$)} \\
\hline
$\mathcal{L}\mathcal{S}_{\mathfrak{A}_1}$ 
& Alternative and $\frac{1}{2}$-Zinbiel 
& dim($\mathcal{L}\mathcal{S}_{\mathfrak{A}_1}$): $\;\;\;\;\;\;\;$ 1, 2, 8, 45, 314
dim($\mathcal{L}\mathcal{S}_{\mathfrak{A}_1}^{\;(!)}$): $\;\;\;\;\;\;\;$ 1, 2, 4, 5, 5
& Lie algebra 
& Commutative algebra \\
\hline
$\mathcal{L}\mathcal{S}_{\mathfrak{B}_1}$ 
& Assosymmetric and left-commutative 
& dim($\mathcal{L}\mathcal{S}_{\mathfrak{B}_1}$): $\;\;\;\;\;\;\;$ 1, 2, 8, 45, 314 
dim($\mathcal{L}\mathcal{S}_{\mathfrak{B}_1}^{\;(!)}$): $\;\;\;\;\;\;\;$ 1, 2, 4, 5, 6
& Lie algebra 
& Commutative algebra \\
\hline
$\mathcal{L}\mathcal{S}_{\mathfrak{A}_2}$ 
& Alternative and left-center is commutator 
& dim($\mathcal{L}\mathcal{S}_{\mathfrak{A}_2}$): $\;\;\;\;\;\;\;$ 1, 2, 8, 44, 285 
dim($\mathcal{L}\mathcal{S}_{\mathfrak{A}_2}^{\;(!)}$): $\;\;\;\;\;\;\;$ 1, 2, 4, 4, 5
& Lie algebra 
& Commutative algebras with an additional identity of degree $4$ which is (\ref{Jord}) \\
\hline
\end{tabular}
\end{table}

\end{document}